\theoremstyle{plain}
\newtheorem{thm}{Theorem}[section]
\newtheorem{lem}[thm]{Lemma}
\newtheorem{prop}[thm]{Proposition}
\newtheorem{cor}[thm]{Corollary}
\theoremstyle{definition}
\newtheorem{df}[thm]{Definition}
\newtheorem{exa}[thm]{Example}
\numberwithin{equation}{section}
\def\D{\mathcal D}
\def\A{\mathcal A}
\def\ss{\alpha}
\def\G{\mathcal G}
\def\a{\alpha}
\def\b{\beta}
\def\o{\Omega}
\def\g{\gamma}
\def\L{\mathcal{L}}
\def\La{\Lambda}
\def\F{\mathcal F}
\def\cc{\CC^{\infty}}
\def\R{\mathbb{R}}
\def\s{\subseteq}
\def\ddt{{\partial\over \partial t}}
\def\dds{{\partial\over \partial s}}
\def\ddtj{{\partial\over \partial t_j}}
\def\mt{\mapsto}
\def\t{\times}
\def\r{\rightarrow}
\def\rr{\rightrightarrows}
\def\cc{\CC^{\infty}}
\def\ld{,\ldots,}
\def\la{\lambda}
\def\gg{\Gamma}
\def\la{\lambda}
 \DeclareMathOperator{\sect}{Sect}
\DeclareMathOperator{\graph}{graph}
 \DeclareMathOperator{\symp}{Symp}
\DeclareMathOperator{\CC}{C}
 \DeclareMathOperator{\diff}{Diff}
 \DeclareMathOperator{\bis}{Bis}
 \DeclareMathOperator{\id}{id}
 \DeclareMathOperator{\supp}{supp}
\DeclareMathOperator{\dd}{d} \DeclareMathOperator{\dom}{dom}
\DeclareMathOperator{\Evol}{Evol} \DeclareMathOperator{\evol}{evol} \DeclareMathOperator{\Jac}{Jac}
\keywords{Lie groupoid, bisection, $n$-transitivity, locality, symplectic groupoid, Lagrangian bisection} \subjclass{Primary: 22E65; secondary: 22A22, 53D05, 58H05}
\address{Faculty of Applied Mathematics, AGH University of Science and
\linebreak Technology, al. Mickiewicza 30, 30-059 Krak\'ow,
Poland} \email{tomasz.rybicki@agh.edu.pl}
\thanks{Partially supported by AGH grant n.11.420.40.}
\title{ $n$-transitivity of bisection groups of a Lie groupoid}
\author{ Tomasz Rybicki}
\begin{document}

\maketitle

\begin{abstract} The notion of $n$-transitivity can be carried over from groups of diffeomorphisms on a manifold $M$ to groups of bisections of a Lie groupoid over $M$.
The main theorem states that the $n$-transitivity is fulfilled  for all $n\in\mathbb N$ by an arbitrary group of $C^r$-bisections of a Lie groupoid $\gg$ of class $C^r$, where $1\leq r\leq\omega$, under mild conditions. For instance, the group of all bisections of any Lie groupoid and the group of all Lagrangian bisections of any symplectic groupoid are  $n$-transitive in the sense of this theorem. In particular, if $\gg$ is source connected  for any arrow $\g\in\gg$ there is a bisection passing through $\g$.
\end{abstract}

\section{Introduction}

We say that a diffeomorphism group $\G\s\diff^r(M)$ on a $C^r$-manifold $M$, where $1\leq r\leq\omega$, is \emph{$n$-transitive} if for any couple of pairwise distinct $n$-tuples $(x_1\ld x_n), (y_1\ld y_n)\in M^n$ there exists $f\in\G$ such that $f(x_i)=y_i$ for $i=1\ld n$. It was probably first observed by Milnor \cite{Mil} that the group $\diff^\infty(M)$
is $n$-transitive for all $n\in\mathbb N$, provided $\dim(M)\geq 2$. This result was generalized by Boothby \cite{Bo} for classical groups of diffeomorphisms. Next
Michor and Vizman \cite{MV} proved analogues of the previous results for the real-analytic case $r=\omega$ (see also \cite{KM}). Finally, by using the notion of generalized foliation, the problem of $n$-transitivity was solved  in a more general context in \cite{Ryb1}, which encompasses also non-transitive diffeomorphism groups. Namely, the main result
in \cite{Ryb1} states that under a natural assumption (locality) any diffeomorphism group is \emph{pseudo-$n$-transitive}, which means that the $n$-transitivity holds on each group orbit simultaneously. 

The concept of a Lie groupoid generalizes Lie groups and applies  to several situations where, comparing with the group theory, there is a lack of some symmetries (\cite{Mk}).
However, important features of Lie theory can be carried over to  Lie groupoids, e.g. basic integrability theorems. This enables to preserve important methods and results
of Lie theory in the groupoid case. On the other hand, the notion of Lie groupoid comprises or describes many basic notions of differential geometry such as manifolds, Lie groups,
vector bundles, foliations, homotopy, holonomy, monodromy, Lie group actions, gauge transformations and others. 

Given a Lie groupoid $\gg=(\gg\rr M, \a, \b)$, where $M$ is the space of units, $\a$ is the source map and $\b$ is the target map, a \emph{bisection} of $\gg$ is a smooth map
$\sigma:M\to\gg$ such that $\a\circ\sigma=\id_M$ and $\b\circ\sigma$ is a diffeomorphism of $M$.
Bisections of a Lie groupoid can be regarded as its  automorphisms. According to the Ehresmann's point of view on                                    groups in geometry the bisection groups play in it a fundamental role (see \cite{E}). 

We may generalize the notion of $n$-transitivity for Lie groupoids in the following way. A bisection group $\G$ of $\gg$ is said to be \emph{$n$-transitive} if for any two pointwise distinct $n$-tuples $(x_1\ld x_n)\in M^n$ and $(\g_1\ld\g_n)\in\gg^n$ with $\a(\g_i)=x_i$ for all $i$ and with  $\b(\g_i)\neq\b(\g_j)$ for all $i\neq j$,  there exists a bisection $\sigma\in\G$  such that $\sigma(x_i)=\g_i$ for all $i$. 

In the case of the pair groupoid over a manifold $M$ the bisections of $\gg=M\t M$ are identified  with the diffeomorphisms 
of $M$, and in this case the above definition coincides with that for diffeomorphism groups. An interesting question arises whether it is possible that a bisection group of a Lie groupoid is $n$-transitive. 
 This question was answered in the affirmative by Chen, Liu and Zhong in \cite{CLZ}
under some conditions. In the present paper we  would like to generalize  theorems from \cite {CLZ} in several aspects. Firstly we do not assume  that $\gg$ is transitive
and we consider also the case when the fibers of $\gg$
have dimension one. Next, we neither assume that the bisection group in question consists of all bisections, nor that it acts transitively on $\a$-fibers (Theorem 3.3). In particular, we obtain $n$-transitivity results for the group of  Lagrangian bisections of a symplectic groupoid.  Finally, since our method of the proof is completely different from that from \cite{CLZ}, our theorems also hold in the real-analytic case.

Let $\bis^r(\gg)$ be the group of all bisections of a Lie groupoid $\gg$ of class $C^r$, where $1\leq r\leq \omega$.  
Let $\G\leq\bis^r(\gg)$ be an arbitrary subgroup  of $\bis^r(\gg)$. The symbol $\G_0$ will stand for the totality of $\sigma\in\G$ such that there exists a $C^r$-isotopy $\{\sigma_t\}_{t\in\R}$ in $\G$ with $\sigma_0=\iota_M$, where $\iota_M: M\hookrightarrow \gg$ is the inclusion, and $\sigma_1=\sigma$. Observe that $\G_0$ is the identity component of $\G$ if $\G$ is locally contractible.
This is the case of $\bis^\infty(\gg)$ for every Lie groupoid $\gg$, since it was proved in \cite{Ryb2} and \cite{SW} that $\bis^\infty(\gg)$ is a Lie group modeled on the
space of sections of the associated Lie algebroid $A(\gg)$ of $\gg$. Likewise, in view of \cite{Ryb3} we know that for every symplectic groupoid $(\gg,\Omega)$ the group $\bis(\gg,\Omega)$ of all Lagrangian bisections of $(\gg,\Omega)$ is also a Lie group.

By $I\G$ we will denote the totality of $C^r$-mappings $$\sigma: \R\t M\ni(t,x)\mapsto \sigma_t(x)=\sigma(t,x)\in\gg$$ such that for all $t$ the mapping $\sigma_t:  x\mapsto \sigma(t,x)$ is an
element of $\G$, and $\sigma_0=\iota_M$. In other words, $I\G$ is the group of all isotopies in $\G$ starting at $\iota_M$. 

Let us formulate the (L)-condition (locality) for $\G$.

\begin{df}
A bisection subgroup $\G\leq\bis^r(\gg)$ satisfies (L)-condition, where $1\leq r\leq \infty$ (resp. $r=\omega$), if for all open, relatively compact sets $U, V\s M$
with $\overline U\s V$, and a $C^r$-isotopy $\sigma\in I\G$ sufficiently $C^1$ close to $\iota_M$, there is  a $C^r$-isotopy $\tau\in I\G$ in $\G$ with
$\tau_t=\sigma_t$ on $U$ and $\supp(\tau_t)\s V$  (resp. $\tau_t$ is sufficiently $C^1$ close to $\sigma_t$ on $U$ and
$\tau_t$ is sufficiently $C^1$ close to the inclusion map $\iota_M$ outside $V$).
\end{df}

Another concept that will be of use is the fiberwise transitivity.
\begin{df}
A group $\G\leq\bis(\gg)$ is said to be \emph{fiberwise transitive} if for every $x\in M$ there are
$k$ isotopies  $\sigma^1\ld\sigma^k\in I\G$, where $k=\dim\ss^{-1}(x)$, such that
\begin{equation}
{\partial\sigma^1\over\partial t}(x)\Big|_{t=0}=\dot\sigma^1(x)|_{t=0}\ld{\partial\sigma^k\over\partial t}(x)\Big|_{t=0}=\dot\sigma^k(x)|_{t=0}
\end{equation}
are linearly independent.
\end{df}
Let $V\s M$ be an open subset. By $\G_V$ we denote all $\sigma\in \G$ with $\supp(\sigma)\s V$, and by $I\G_V$ the totality of $\sigma\in I\G$ with $\supp(\sigma_t)\s V$ for all $t$. Next, we put $\gg_x=\a^{-1}(x)$ and denote by $\gg_x^o$ the connected component of $x$ in $\gg_x$. Let $\gg^o=\bigcup_{x\in M}\gg^o_x$. Note that $\gg_x^o$  is open in $\gg_x$
and $\gg^o$ is an open subgroupoid of $\gg$ (cf. \cite{Mk}). For any $U\s M$ let $\gg^o_U=\gg^o\cap \a^{-1}(U)$. We will also use the notation: for $n\in\mathbb N$ and for  a pairwise distinct $n$-tuple $(x_1\ld x_n)\in M^n$ we set
\begin{equation}
\mathcal R(x_1\ld x_n)=\{(\g_1\ld\g_n)\in\gg_{x_1}^o\t\cdots\t\gg_{x_n}^o|\,(\forall i\neq j), \b(\g_i)\neq \b(\g_j) \}.
\end{equation}
Finally, let $\F_\gg$ be the generalized foliation on $M$ determined by $\gg$ (see \cite{Mk}). Given $(x_1\ld x_n)\in M^n$ we denote by $L_i=\b(\gg^o_{x_i})$ the leaf of $\F_\gg$ passing through $x_i$.

Our first result concerning $n$-transitivity is the following
\begin{thm} Let $1\leq r\leq\infty$ (resp. $r=\omega$).
Suppose that  a bisection group $\G\leq\bis^r(\gg)$ satisfies (L)-condition and is fiberwise transitive.
Let $(x_1\ld x_n)\in M^n$ be a pairwise distinct $n$-tuple.  Assume, in addition, that for $i=1\ld n$ either $\dim(L_i)\geq 2$, or $x_j\not\in L_i$ for all $j\neq i$. Then for any $(\g_1\ld \g_n)\in\mathcal R(x_1\ld x_n)$  and for any $V_1\ld V_n$ such that $V_i$ is an open neighborhood of $L_i$ in $M$, there exists $\sigma\in\G$ such that $\sigma(x_i)=\g_i$ for each $i$
and $\supp(\b\circ\sigma)\s V_1\cup\ldots\cup V_n$  (resp. $\b\circ\sigma$ is sufficiently $C^1$ close to the inclusion $\iota_M$ on $M\setminus( V_1\cup\ldots\cup V_n)$).
\end{thm}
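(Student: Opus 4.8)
The plan is to reduce the statement, by a standard localization argument, to the case where all the target arrows are close to the units, and then to produce the desired bisection as a time-one map of a suitable isotopy built out of the fiberwise-transitive generators, corrected to have the prescribed support by the (L)-condition.

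\medskip

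\textit{Step 1 (Reduction to a neighborhood of the units).} Since $\gg^o$ is an open subgroupoid, each $\gg^o_{x_i}$ is connected, so there is a $C^r$-path $t\mapsto\g_i(t)$ in $\gg^o_{x_i}$ with $\g_i(0)=x_i$ (viewed via $\iota_M$) and $\g_i(1)=\g_i$; moreover, by perturbing slightly we may assume the $\g_i(t)$ have pairwise distinct targets for each fixed $t$ (here is where the hypothesis that either $\dim L_i\ge 2$ or $x_j\notin L_i$ for $j\ne i$ enters, exactly as in the diffeomorphism-group case of \cite{Ryb1}: in a leaf of dimension at least two one can dodge finitely many points, and if $x_j\notin L_i$ the paths live in different leaves and cannot collide). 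Subdividing $[0,1]$ finely, it suffices to prove the following local claim: given $(x_1\ld x_n)$ and $(\g_1\ld\g_n)\in\mathcal R(x_1\ld x_n)$ with each $\g_i$ as $C^1$-close to $\iota_M(x_i)$ as we like, and given the neighborhoods $V_i\supseteq L_i$, there is $\sigma\in\G$ with $\sigma(x_i)=\g_i$ and $\supp(\b\circ\sigma)\s\bigcup V_i$. The full statement then follows by composing the finitely many $\sigma$'s obtained from consecutive subdivision points, each time re-centering at the current $n$-tuple of arrows.

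\medskip

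\textit{Step 2 (Constructing a local isotopy realizing a prescribed velocity).} Fix $i$ and set $k_i=\dim\a^{-1}(x_i)$. By fiberwise transitivity there are $\sigma^{i,1}\ld\sigma^{i,k_i}\in I\G$ whose velocities $\dot\sigma^{i,l}(x_i)|_{t=0}$ span $T_{x_i}\a^{-1}(x_i)$. Taking $C^r$-combinations (reparametrizations and compositions) of these isotopies, and using that the $x_i$ are distinct so that supports can be separated in disjoint coordinate charts $W_i\ni x_i$ with $\overline{W_i}\s V_i$, we obtain for each prescribed collection of tangent vectors $v_i\in T_{x_i}\a^{-1}(x_i)$ an isotopy $\rho\in I\G$ with $\dot\rho(x_i)|_{t=0}=v_i$. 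Exponentiating in the $\a$-fibers and using the (L)-condition to cut off the support into $\bigcup W_i\s\bigcup V_i$ (at the cost of staying $C^1$-small on the $W_i$, and $C^1$-close to $\iota_M$ outside in the $r=\omega$ case), the time-one map $\rho_1$ moves $x_i$ along the fiber $\gg_{x_i}$ in a direction of our choosing while keeping the support inside $\bigcup V_i$. A compactness/open-mapping argument on the reachable set — the set of $(\rho_1(x_1)\ld\rho_1(x_n))$ obtainable this way is open in $\gg^o_{x_1}\t\cdots\t\gg^o_{x_n}$ near the units because its tangent space at $(x_1\ld x_n)$ is all of $\bigoplus T_{x_i}\a^{-1}(x_i)$ — shows that every sufficiently small target $(\g_1\ld\g_n)$ with pairwise distinct $\b(\g_i)$ is hit; the pairwise-distinctness of the targets is preserved throughout because we may keep the whole motion inside small disjoint charts around the $x_i$ whose images under $\b$ can be kept disjoint when the $\g_i$ are close to the units and satisfy $\b(\g_i)\ne\b(\g_j)$.

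\medskip

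\textit{Main obstacle.} The delicate point is Step 2: turning the infinitesimal fiberwise transitivity (linear independence of $k_i$ velocity vectors at a \emph{single} point $x_i$) into a genuine local surjectivity of the "evaluate at $x_1\ld x_n$" map, while simultaneously (a) respecting the support constraint $\supp(\b\circ\sigma)\s\bigcup V_i$, which forces us to invoke the (L)-condition and therefore to work only with $C^1$-small isotopies, and (b) keeping the targets $\b(\sigma(x_i))$ pairwise distinct along the entire deformation, not merely at the endpoints. Both are handled by localizing everything into disjoint relatively compact charts $W_i$ around the $x_i$ with $\overline{W_i}\s V_i$ — on disjoint supports the bisections multiply independently, so the problem splits into $n$ one-point problems, each of which is the infinitesimal-to-local passage that the (L)-condition is designed to make work (this is the groupoid analogue of the argument in \cite{Ryb1}). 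The real-analytic case is identical except that "$\supp\s V_i$" is everywhere replaced by "$C^1$-close to $\iota_M$ outside $V_i$", as permitted by the $r=\omega$ clause of the (L)-condition, and the cutoffs are replaced by analytic approximations.
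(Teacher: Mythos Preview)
Your argument is essentially correct and shares its local core with the paper, but the global architecture differs. The paper does not subdivide a path. Instead it sets
\[
\mathcal S(x_1,\ldots,x_n)=\{(\gamma_1,\ldots,\gamma_n)\in\mathcal R(x_1,\ldots,x_n):\ \exists\,\sigma\in\G,\ \sigma(x_i)=\gamma_i\},
\]
proves directly that $\mathcal R(x_1,\ldots,x_n)$ is connected (it is the complement in $\prod_i\gg^o_{x_i}$ of $\tilde\beta^{-1}(\Delta)$, which has codimension $\ge 2$ by the leaf-dimension hypothesis; this is the paper's Lemma~2.1), and then runs an open--closed argument: at any $(\gamma_1,\ldots,\gamma_n)\in\mathcal R$ one writes down an explicit finite-dimensional map
\[
\Phi(t_1,\ldots,t_N)=\Big(\big(\sigma^{11}_{t_1}\star\cdots\star\sigma^{nk_n}_{t_N}\big)(\beta(\gamma_i))\cdot\gamma_i\Big)_{i=1}^n
\]
from localized isotopies and checks that $\Jac\Phi(0)$ is block-diagonal with invertible blocks, so both $\mathcal S$ and $\mathcal R\setminus\mathcal S$ are open. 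A short computation shows the relation ``$\exists\,\sigma,\ \sigma(\beta(\gamma_i))\cdot\gamma_i=\bar\gamma_i$'' is transitive and symmetric, which is what makes the complement open as well. Your path-and-subdivision reduction reaches the same conclusion, with the perturbation of the paths $t\mapsto\gamma_i(t)$ playing the role of the connectedness lemma and compactness of $[0,1]$ replacing the open--closed dichotomy; this is slightly more constructive, while the paper's route avoids having to argue uniformity of ``sufficiently small'' along the path and avoids the re-centering bookkeeping.

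One place where your Step~2 is looser than the paper: producing an isotopy $\rho$ with prescribed \emph{velocity} $\dot\rho(x_i)|_{t=0}=v_i$ does not by itself yield openness of the reachable set --- you need the explicit finite-parameter family $\Phi$ and the inverse function theorem, not a single $\rho$ per direction. You gesture at this with ``open-mapping argument'', but it is worth actually writing $\Phi$; once you do, the (L)-condition is precisely what makes $\Jac\Phi(0)$ block-diagonal (the $\sigma^{ij}$ can be taken supported near $x_i$ only, so they fix the other $x_j$), which is the rigorous content of your ``on disjoint supports the bisections multiply independently''. The support bound under iterated composition is handled in the paper by a short induction on the number of factors $\hat\sigma$, matching your ``composing the finitely many $\sigma$'s'' at the end of Step~1.
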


The proof will be presented in the next section.
A stronger version  of the above theorem with no assumption that $\G$ is fiberwise transitive will be proved in section 3. 

As an immediate consequence of Theorem 1.3 we have

\begin{cor}
Under the above assumptions on  $\G$ and $(x_1\ld x_n)$, suppose $(y_1\ld y_n)\in M^n$ is a pairwise distinct $n$-tuple such that $x_i, y_i$ belong to the same leaf $L_i$.   Then for any  $V_1\ld V_n$ such that $V_i$ is an open neighborhood of the leaf $L_i$, there exists $\sigma\in\G$ such that $\b(\sigma(x_i))=y_i$
and $\supp(\b\circ\sigma)\s V_1\cup\ldots\cup V_n$ for $1\leq r\leq\infty$ (resp. $\b\circ\sigma$ is sufficiently $C^1$ close to $\iota_M$ on $M\setminus( V_1\cup\ldots\cup V_n)$ for $r=\omega$).
\end{cor}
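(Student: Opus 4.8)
The plan is to deduce the corollary directly from Theorem 1.3 by lifting the prescribed target points $y_i$ to arrows of $\gg$. Since $x_i$ and $y_i$ lie in the same leaf, namely $L_i=\b(\gg^o_{x_i})$, I would first choose, for each $i$, an arrow $\g_i\in\gg^o_{x_i}$ with $\b(\g_i)=y_i$; such a $\g_i$ exists precisely because $y_i\in L_i$ means $y_i$ lies in the image of $\gg^o_{x_i}$ under $\b$. Note that then $\a(\g_i)=x_i$ automatically, since $\g_i\in\gg_{x_i}=\a^{-1}(x_i)$.

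Next I would verify that $(\g_1\ld\g_n)\in\mathcal R(x_1\ld x_n)$. By construction $\g_i\in\gg^o_{x_i}$ for every $i$, so it only remains to check that $\b(\g_i)\neq\b(\g_j)$ whenever $i\neq j$; but $\b(\g_i)=y_i\neq y_j=\b(\g_j)$ because $(y_1\ld y_n)$ is a pairwise distinct $n$-tuple. Hence the tuple $(\g_1\ld\g_n)$ meets the hypothesis of Theorem 1.3.

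Finally, I would invoke Theorem 1.3 for the data $(x_1\ld x_n)$, $(\g_1\ld\g_n)$ and the given open neighborhoods $V_1\ld V_n$ of the leaves $L_1\ld L_n$: the standing assumptions of the corollary on $\G$ (the (L)-condition and fiberwise transitivity) and on $(x_1\ld x_n)$ (for each $i$, $\dim(L_i)\geq2$, or $x_j\notin L_i$ for all $j\neq i$) are exactly those required by Theorem 1.3. This produces a bisection $\sigma\in\G$ with $\sigma(x_i)=\g_i$, hence $\b(\sigma(x_i))=\b(\g_i)=y_i$ for all $i$, and with $\supp(\b\circ\sigma)\s V_1\cup\ldots\cup V_n$ in the case $1\leq r\leq\infty$ (resp. $\b\circ\sigma$ sufficiently $C^1$ close to $\iota_M$ on $M\setminus(V_1\cup\ldots\cup V_n)$ in the case $r=\omega$), which is the assertion. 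There is no genuine obstacle in this argument; the only point requiring a line of justification is the membership $(\g_1\ld\g_n)\in\mathcal R(x_1\ld x_n)$, and that is immediate from the pairwise distinctness of $(y_1\ld y_n)$.
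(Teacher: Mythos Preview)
Your proposal is correct and matches the paper's approach: the paper declares Corollary 1.4 an immediate consequence of Theorem 1.3, and its proof of the parallel Corollary 3.4 is precisely your argument---lift each $y_i$ to some $\g_i\in\gg^o_{x_i}$ via surjectivity of $\b|_{\gg^o_{x_i}}$ onto $L_i$, observe that pairwise distinctness of the $y_i$ forces $(\g_1\ld\g_n)\in\mathcal R(x_1\ld x_n)$, and apply the theorem.
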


Also this corollary will be given in a more general form in section 3. Another easy consequence is the existence of bisection through a given arrow.

\begin{cor}
Under the above assumptions on  $\G$, for any $\g\in\gg^o$ there is $\sigma\in\G$ such that $\sigma(\a(\g))=\g$. Moreover, $\b\circ\sigma$ can be supported in a sufficienly
small neighborhood $V$ of the leaf $L_{\a(\g)}$ for $1\leq r\leq\infty$ (resp. $\b\circ\sigma$ can be sufficiently $C^1$ close to $\iota_M$ on $M\setminus V$ for $r=\omega$).
\end{cor}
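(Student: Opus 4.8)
The plan is to derive Corollary 1.6 directly from Theorem 1.3 by taking $n=1$. Given an arrow $\g\in\gg^o$, set $x=\a(\g)$ and $x_1=x$. Since $\g\in\gg^o$ we have $\g\in\gg^o_x$, and the single-element tuple $(\g)$ trivially lies in $\mathcal R(x_1)=\mathcal R(x)$ because the condition $\b(\g_i)\neq\b(\g_j)$ for $i\neq j$ is vacuous when $n=1$. Likewise the extra hypothesis in Theorem 1.3 --- that for each $i$ either $\dim(L_i)\geq 2$ or $x_j\notin L_i$ for all $j\neq i$ --- is automatically satisfied for $n=1$, since there is no index $j\neq i$. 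Thus the theorem applies with no constraint on the fiber dimension.

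Concretely, first I would fix an open neighborhood $V$ of the leaf $L_{\a(\g)}=\b(\gg^o_{x})$ in $M$; in the $C^\infty$ case one may additionally take $V$ relatively compact after intersecting with a suitable chart domain if one wants compact support, but in general $V$ can be any prescribed neighborhood. Apply Theorem 1.3 with $n=1$, the tuple $(x_1)=(x)$, the target tuple $(\g_1)=(\g)\in\mathcal R(x)$, and the neighborhood $V_1=V$. The theorem produces $\sigma\in\G$ with $\sigma(x)=\g$, hence $\sigma(\a(\g))=\g$ as required, and with $\supp(\b\circ\sigma)\s V$ in the case $1\leq r\leq\infty$ (resp. $\b\circ\sigma$ sufficiently $C^1$ close to $\iota_M$ on $M\setminus V$ in the case $r=\omega$). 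This is exactly the assertion of Corollary 1.6.

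There is essentially no obstacle here: the statement is a degenerate ($n=1$) special case of Theorem 1.3, and the only thing to check is that all the hypotheses of the theorem specialize correctly, which they do trivially. The one point worth a sentence of care is confirming that the ``pairwise distinct $n$-tuple'' requirement and the condition defining $\mathcal R$ impose nothing when $n=1$, and that the leaf-dimension alternative is vacuously true; once this is observed, the corollary follows immediately. (The phrase ``under the above assumptions on $\G$'' refers to $\G$ satisfying the (L)-condition and being fiberwise transitive, which are precisely the standing hypotheses of Theorem 1.3.)
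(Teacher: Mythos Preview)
Your argument is correct and matches the paper's intended approach: the paper presents this corollary as an ``easy consequence'' of Theorem~1.3 without spelling out a proof, and specializing to $n=1$ as you do is precisely the intended derivation. Your observation that the pairwise-distinctness condition, the definition of $\mathcal R(x)$, and the leaf-dimension alternative are all vacuous for $n=1$ is exactly what makes the reduction immediate.
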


We will show in section 3 that the fiberwise transitivity of $\G$ can be omitted.

\section{Proof of Theorem 1.3}

We start with some auxiliary result.
\begin{lem}
Let $N$ be a connected manifold and $E\s N$ be  a finite union $E=E_1\cup\ldots\cup E_l$ of submanifolds  $E_i$ of codimension $d\geq 2$ such that for each 
$\{i_1\ld i_k\}\s\{1\ld l\}$, $k>1$, the intersection $\bigcap_{s=1}^kE_{i_s}$ is a submanifold of any $E_{i_s}$ of codimension $>d$. Then $N\setminus E$  is also connected.
\end{lem}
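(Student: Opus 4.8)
The plan is to prove that $N \setminus E$ is connected by a dimension-counting argument combined with a transversality/genericity argument for paths. First I would observe that it suffices to show any two points $p, q \in N \setminus E$ can be joined by a path avoiding $E$. Since $N$ is connected (and, being a manifold, path-connected), fix a smooth path $\gamma : [0,1] \to N$ with $\gamma(0) = p$, $\gamma(1) = q$. The idea is to perturb $\gamma$ — keeping endpoints fixed — to a path that misses $E$ entirely.

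**The transversality step.**

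The key point is that each $E_i$ has codimension $d \geq 2$ in $N$, so a generic path (a $1$-dimensional object) is disjoint from $E_i$: indeed, by the parametric transversality theorem one can perturb $\gamma$ rel endpoints so that $\gamma \pitchfork E_i$ for each $i$, and transversality of a $1$-manifold to a submanifold of codimension $\geq 2$ forces empty intersection, i.e. $\gamma([0,1]) \cap E_i = \emptyset$. One must be slightly careful because $E_i$ need not be closed in $N$; the standard fix is to exhaust $N$ by compact sets, or to work with $\overline{E_i}$ and note that the frontier $\overline{E_i} \setminus E_i$ is contained in lower-dimensional strata (this is where the hypothesis on the intersections $\bigcap_s E_{i_s}$ having codimension $> d$ enters — it controls how the pieces fit together at their boundaries so that the union $E$ is, stratum by stratum, a locally finite union of submanifolds of codimension $\geq 2$). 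Since the perturbations for the finitely many pieces can be made simultaneously small, we obtain a single path from $p$ to $q$ in $N \setminus E$.

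**Handling the non-closedness and the stratification.**

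The main obstacle I anticipate is precisely that $E$ is only a finite union of submanifolds, not a closed submanifold, so naive transversality does not immediately apply and the closures can behave badly. The cleanest route is to argue by induction on $l$, or on the depth of the natural stratification of $E$: write $E = S_{\mathrm{top}} \sqcup (\text{lower strata})$ where $S_{\mathrm{top}}$ consists of points lying in exactly one $E_i$ (an honest submanifold of codimension $d$, open in its closure after removing the intersection locus), and where the lower strata are the sets $\bigcap_{s=1}^k E_{i_s}$, which by hypothesis have codimension $> d \geq 3$. Push $\gamma$ off the lower-dimensional strata first (they have codimension $\geq 3$, hence a fortiori avoidable by a path, and their union is closed in a suitable neighborhood), then off the now-closed top stratum; alternatively, invoke a standard lemma that removing a locally finite union of submanifolds of codimension $\geq 2$ from a connected manifold leaves it connected. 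Either way the codimension bound $d \geq 2$ is what does the real work, and the extra hypothesis on intersections is exactly the bookkeeping needed to reduce to that lemma.
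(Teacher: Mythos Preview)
Your approach is essentially the same as the paper's: connect two points of $N\setminus E$ by a path in $N$ and perturb it rel endpoints, using the codimension $\geq 2$ hypothesis, to avoid $E$. If anything you are more careful than the paper about the transversality and stratification bookkeeping (the paper simply asserts that the intersection of a simple curve with $E$ is a finite set of points and closed intervals and perturbs locally in charts), but the underlying argument is identical.
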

\begin{proof} Take $x,y\in N\setminus E$. Since $N$ is a connected manifold, find a simple curve $c:[0,1]\to N$ such that $c(0)=x$ and $c(1)=y$. Then $c([0,1])\cap E$
consists of a finite number of points and closed intervals. Perform a small perturbation near each of the intersection point or interval (in the latter case by using
a finite cover of the intersection by chart domains) to obtain another curve
$\bar c: [0,1]\to N$ homotopic rel. endpoints to $c$, which has the image disjoint with $E$. It is possible thanks to the assumption that $d\geq 2$.
Thus any two points in $N\setminus E$ can be joined by a curve with image in $N\setminus E$.
This shows that $N\setminus E$ is connected.
\end{proof}

Let $(x_1\ld x_n)$ be an $n$-tuple of pairwise distinct points and $V_1\t\cdots\t V_n\s M^n$ be an open neighborhood of $L_1\t\cdots\t L_n$. 
 Set
$$ \mathcal S(x_1\ld x_n)=\{(\g_1\ld\g_n)\in\gg_{x_1}^o\t\cdots\t\gg_{x_n}^o|\,(\exists \sigma\in\G)\; \sigma(x_i)=\g_i,\, i=1\ld n\},$$
and observe that in view of the definition of bisection
$$ \mathcal S(x_1\ld x_n)\s\mathcal R(x_1\ld x_n),$$
where the set $\mathcal R(x_1\ld x_n)$ is given by (1.2). We wish to show that $\mathcal R(x_1\ld x_n)$ is a connected subset of $\gg_{x_1}^o\t\cdots\t\gg_{x_n}^o$.

Note that, after permuting indices, the set $\mathcal R(x_1\ld x_n)$ can be written in the form
$$\mathcal R(x_1\ld x_n)=\mathcal R(x_1\ld x_{k_1})\t\mathcal R(x_{k_1+1}\ld x_{k_2})\t\cdots\t\mathcal R(x_{k_{l-1}+1}\ld x_{k_l})
$$
with $l,k_s\in\mathbb N$ ($1\leq s\leq l$) and $k_l=n$, such that $L_i=L_j$ iff $k_{s-1}<i\leq j\leq k_s$ for some $s$ (here $k_0=0$), for all $L_i, L_j\in\F_\gg$.
Clearly, $\mathcal R(x)=\gg_x^o$ is connected by assumption. Therefore it suffices to show that $\mathcal R(x_1\ld x_k)$ is connected whenever $L_1=\ldots = L_k=: L$ and $\dim(L)\geq 2$.

In fact, the target projection $\b$ induces a surjective submersion $$\tilde\b: \gg_{x_1}^o\t\cdots\t\gg_{x_k}^o\to L\t\cdots\t L$$ and $$\mathcal R(x_1\ld x_k)=\gg_{x_1}^o\t\cdots\t\gg_{x_k}^o\setminus\tilde\b^{-1}(\Delta),$$ where $\Delta=\{(y_1\ld y_k)\in L^k|\,(\exists i\neq j), y_i=y_j\}$.
Since the set $\Delta$ has  codimension equal to $\dim(L)$  in $L^n$,
then so has $\tilde\b^{-1}(\Delta)$ in $\gg_{x_1}^o\t\cdots\t\gg_{x_k}^o$.  Thus, in view of Lemma 2.1 with $N=\gg^o_{x_1}\t\cdots\t\gg^o_{x_k}$ and $E=\tilde\b^{-1}(\Delta)$, the set
$\mathcal R(x_1\ld x_k)$ is connected. It follows that $\mathcal R(x_1\ld x_n)$ is also connected as a product of connected sets.

Denote $k_i=\dim(\gg_{x_i})$ for $i=1\ld n$,
and   assume the existence of bisection isotopies  $\sigma^{i1}\ld\sigma^{ik_i}\in I\G$ such that
\begin{equation}
\dot\sigma^{i1}(x_i)|_{t=0}\ld\dot\sigma^{ik_i}(x_i)|_{t=0}
\end{equation}
are linearly independent, according to (1.1). If $\dim(\gg_{x_i})=0$, i.e. $L_i=\{x_i\}$,  we assume that $x_i\not\in V_j$ for $j\neq i$. Moreover, in view 
of the (L)-condition for $1\leq r\leq\infty$ (resp. for $r=\omega$) and the fact that $\sigma^{i,j}$ may be sufficiently $C^1$-close to $\iota_M$, we can and do assume that $\supp(\sigma^{ij}_t)\s U_i\s V_i$ 
for all $i,j,t$ (resp. $\sigma^{ij}_t$  
is sufficiently $C^1$-close to $\iota_M$ outside $U_i$ for all $i,j,t$), and that $U_i$ are so small that
$$ \b(\gg^o_{U_i})\s V_i$$ for all $i$, and $$ \b(\gg^o_{U_i})\cap U_j=\emptyset$$ for all $j\neq i$.

The group multiplication in $\G$ is given by
\begin{equation}
\sigma\star\tau(x)=\sigma((\b\circ\tau(x))).\tau(x),\quad \sigma, \tau\in\G, x\in M,
\end{equation}
and the inverse of $\sigma\in\G$ by
\begin{equation}
\sigma^{-1}(x)=\sigma((\b\circ\sigma)^{-1}(x))^{-1}.
\end{equation}

For $\sigma, \tau\in I\G$ and $x\in M$ consider the mapping $$(t,s)\mapsto (\sigma_t\star\tau_s)(x).$$ In view of (2.2) we have
\begin{align}
\begin{split}
\ddt\Big|_{(0,0)}(\sigma_t\star\tau_s)(x)&=\ddt\Big|_{t=0}\sigma_t(x).x=\ddt\Big|_{t=0}\sigma_t(x)=\dot\sigma(x)|_{t=0},\\
\dds\Big|_{(0,0)}(\sigma\star\tau)(x)&=\dds\Big|_{s=0}(\b\circ\tau_s)(x).\tau_s(x)=\dds\Big|_{s=0}\tau_s(x)=\dot\tau(x)|_{s=0}.\\
\end{split}
\end{align}

It follows from (2.4) that for the mapping $(t_1\ld t_k)\mapsto(\sigma_{t_1}^1\star\cdots\star\sigma_{t_k}^k)(x)$, where $\sigma^1\ld\sigma^k\in I\G$,  we get by trivial induction that
\begin{equation}
\ddtj\Big|_{(0\ld 0)}(\sigma^1_{t_1}\star\cdots\star\sigma^k_{t_k})(x)=\dot\sigma^j(x)|_{t_j=0}.
\end{equation}
Put $N=k_1+\cdots+k_n$ and define a mapping $\Phi$ on a neighborhood of $0=(0\ld 0)\in\R^N$ into $\gg_{x_1}^o\t\cdots\t\gg_{x_n}^o$ by
\begin{displaymath}
\Phi(t_1\ld t_N)=\left(\begin{array}{ccc}
&\sigma^{11}_{t_1}\star\cdots\star\sigma^{1k_1}_{t_{k_1}}\star\cdots\star\sigma^{n1}_{t_{N-k_n+1}}\star\cdots\star\sigma^{nk_n}_{t_N}(x_1)\\
&\cdots\\
&\sigma^{11}_{t_1}\star\cdots\star\sigma^{1k_1}_{t_{k_1}}\star\cdots\star\sigma^{n1}_{t_{N-k_n+1}}\star\cdots\star\sigma^{nk_n}_{t_N}(x_n)\\
\end{array}\right).
\end{displaymath}
Set
\begin{equation}
\hat\sigma(t_1\ld t_N)=\sigma^{11}_{t_1}\star\cdots\star\sigma^{1k_1}_{t_{k_1}}\star\cdots\star\sigma^{n1}_{t_{N-k_n+1}}\star\cdots\star\sigma^{nk_n}_{t_N},
\end{equation}
and observe that, due to (2.2), $\supp(\hat\sigma(t_1\ld t_N))\s U_1\cup\cdots\cup U_n$. It follows that
\begin{equation}
\supp(\b\circ\hat\sigma(t_1\ld t_N))\s V_1\cup\cdots\cup V_n\quad \hbox{for any}\quad (t_1\ld t_N),
\end{equation}
 provided $1\leq r\leq\infty$.  Next, for $r=\omega$, by a similar argument the map $\b\circ\hat\sigma(t_1\ld t_N)$
 is sufficiently $C^1$-close to the inclusion $\iota_M$ outside
$V_1\cup\cdots\cup V_n$ for all $(t_1\ld t_N)\in\R^N$.
The mapping $\Phi$ is of class $C^r$ since $\a$ is a submersion and, consequently, the fibers are imbedded submanifolds. In the case $1\leq r\leq\infty$ the Jacobian matrix
 $\Jac\Phi(t_1\ld t_N)|_{t=0}$ of $\Phi$ at 0 is a block matrix consisting of the sequences (2.1) due to (2.5). In the case $r=\omega$ the Jacobian matrix $\Jac\Phi(t_1\ld t_N)|_{t=0}$ is $C^0$-close to the
matrix from the previous case. Consequently, in each case the Jacobian of $\Phi$ is nonzero at $0\in\R^N$, and $\Phi$ maps a neighborhood of 0 diffeomorphically onto an open neighborhood of $(x_1\ld x_n)$
in $\mathcal S(x_1\ld x_n)$.

The whole procedure may be repeated when  $(x_1\ld x_n)$ is replaced by an arbitrary 
 $n$-tuple $(\g_1\ld \g_n)\in\mathcal S(x_1\ld x_n)$, and if we take $U_1'\ld U_n'$  instead of $U_1\ld U_n$, where $U_i'\s M$ is an open neighborhood of $\b(\g_i)$
such that $U_i'$, $U'_j$ are disjoint for $i\neq j$. Furthermore, we assume for all $i$
\begin{equation} \b(\gg^o_{U'_i})\s V_i. \end{equation}
  For $i=1\ld n$ we can choose isotopies $\sigma^{ij}\in I\G_{U'_i}$ for  $j=1\ld k_i$ so that the vectors
\begin{equation}
\dot\sigma^{i1}(\b(\g_i))|_{t=0}\ld\dot\sigma^{ik_i}(\b(\g_i))|_{t=0}
\end{equation}
are linearly independent for all $i$. Then, similarly as before, we define
\begin{displaymath}
\Phi(t_1\ld t_N)=\left(\begin{array}{ccc}
&\big(\sigma^{11}_{t_1}\star\cdots\star\sigma^{1k_1}_{t_{k_1}}\star\cdots\star\sigma^{n1}_{t_{N-k_n+1}}\star\cdots\star\sigma^{nk_n}_{t_N}(\b(\g_1))\big).\g_1\\
&\cdots\\
&\big(\sigma^{11}_{t_1}\star\cdots\star\sigma^{1k_1}_{t_{k_1}}\star\cdots\star\sigma^{n1}_{t_{N-k_n+1}}\star\cdots\star\sigma^{nk_n}_{t_N}(\b(\g_n))\big).\g_n\\
\end{array}\right).
\end{displaymath}

Again, since the blocks (2.9) are linearly independent (here we use the hypothesis that $(\g_1\ld\g_n)\in \mathcal R(x_1\ld x_n)$ ), $\Phi$ is a diffeomorphism of an open neighborhood of $0\in\R^N$ onto an open neighborhood $\mathcal U$ of $(\g_1\ld\g_n)$ in $\mathcal R(x_1\ld x_n)$.
Thus we have proved that for every $(\bar\g_1\ld\bar\g_n)\in\mathcal U$ there is $\sigma\in\bis^r(\gg)$ such that
\begin{equation} \sigma(\b(\g_i)).\g_i=\bar\g_i
\end{equation}
for all $i$. Furthermore, the relation $(\g_1\ld\g_n)\sim(\bar\g_1\ld\bar\g_n)$ in the set $\mathcal R(x_1\ld x_n)$ defined by (2.10) is transitive. Indeed, for $\sigma, \tau\in\G$ satisfying (2.10) and the equality
\begin{equation*} \tau(\b(\bar\g_i)).\bar\g_i=\hat\g_i
\end{equation*}
we have
\begin{align*} \big(\tau\star\sigma(\b(\g_i))\big).\g_i&=\big(\tau(\b(\sigma(\b(\g_i)))).\sigma(\b(\g_i))\big).\g_i\\
&=\big(\tau(\b(\sigma(\b(\g_i)).\g_i)).\sigma(\b(\g_i))\big).\g_i\\
&=\tau(\b(\sigma(\b(\g_i)).\g_i)).\big(\sigma(\b(\g_i)).\g_i\big)\\
&=\tau(\b(\bar\g_i)).\bar\g_i=\hat\g_i.
\end{align*}

Thus $\sim$ is transitive and, if $(\g_1\ld\g_n)\sim(\bar\g_1\ld\bar\g_n)$ and $(\g_1\ld\g_n)\in\mathcal S(x_1\ld x_n)$ then $(\bar\g_1\ld\bar\g_n)\in\mathcal S(x_1\ld x_n)$.

The relation $\sim$ is symmetric too. In fact, for $\sigma\in\G$ verifying (2.10) we obtain from (2.3) the symmetric relation
\begin{align*} \sigma^{-1}(\b(\bar\g_i)).\bar\g_i&=\sigma^{-1}\big(\b(\sigma(\b(\g_i)).\g_i)\big).\big(\sigma(\b(\g_i)).\g_i\big)\\
&=\sigma^{-1}\big(\b(\sigma(\b(\g_i)))\big).\big(\sigma(\b(\g_i)).\g_i\big)\\
&=\sigma\big((\b\circ\sigma)^{-1}(\b(\sigma(\b(\g_i))))\big)^{-1}.\big(\sigma(\b(\g_i)).\g_i\big)\\
&=\sigma\big((\b\circ\sigma)^{-1}\circ(\b\circ\sigma)(\b(\g_i))\big)^{-1}.\big(\sigma(\b(\g_i)).\g_i\big)\\
&=(\sigma(\b(\g_i)))^{-1}.\big(\sigma(\b(\g_i)).\g_i\big)\\
&=\big(\sigma(\b(\g_i))^{-1}.\sigma(\b(\g_i))\big).\g_i=\g_i.\\
\end{align*}

In view of the transitivity and symmetry of the relation $\sim$,  $\mathcal S(x_1\ld x_n)$ and $\mathcal R(x_1\ld x_n)\setminus \mathcal S(x_1\ld x_n)$ are both open subsets of the connected set $\mathcal R(x_1\ld x_n)$. Thus $\mathcal S(x_1\ld x_n)=\mathcal R(x_1\ld x_n)$. That is, for each $n$-tuple $(\g_1\ld\g_n)\in\mathcal R(x_1\ld x_n)$ there is a bisection $\tilde\sigma$ with
$\tilde\sigma(x_i)=\g_i$.

 Finally, we have to prove that $\supp(\b\circ\tilde\sigma)\s V_1\t\cdots\t V_n$ in the case $1\leq r\leq\infty$. We proceed by induction on the number of factors 
of $\tilde\sigma$ of the form (2.6). If there is only one such a factor, that is $\tilde\sigma=\hat\sigma(t_1\ld t_N)$, then we are done by (2.7).
Suppose now that $\tilde\sigma=\sigma\star\tau$, where $\sigma=\hat\sigma(t_1\ld t_N)$ is as in (2.6), and $\supp(\b\circ\tau)\s V_1\cup\cdots\cup V_n$ by the induction assumption.
Here $\tau(x_i)=\g_i$ and $U_i'$ is a neighborhood of $\g_i$ such that (2.8) is fulfilled for all $i$. Next, the bisections $\sigma^{ij}$ in the decomposition (2.6)
 satisfy $\sigma^{ij}\in I\G_{U'_i}$ for all $i,j$. Therefore, in view of (2.2), $\supp(\b\circ\tilde\sigma)\s V_1\cup\cdots\cup V_n$,
as required.  
A similar inductive reasoning shows that  $\tilde\sigma$ may be chosen sufficiently $C^1$ close to the inclusion $\iota_M$ on
 $M\setminus( V_1\cup\ldots\cup V_n)$ for $r=\omega$. The theorem follows.

\section{The case when $\G$ is not fiberwise transitive}

In this section  we will generalize Theorem 1.3 by using the concept of singular foliations (which from now on will be called foliations) and  some related notions.
In particular, we assign to any bisection group $\G\leq\bis^r(\gg)$ a foliation on $\gg$ denoted by $\mathcal F_{\G}$.

Let $1\leq r\leq\omega$ and let $L$ be a~subset of
a~$C^r$-manifold $P$ endowed with a~$C^r$-differen\-tiable
structure which makes it an~immersed submanifold. Then $L$ is
\emph{weakly imbedded} if for any locally connected topological
space $N$ and a continuous map $f:N\rightarrow P$ satisfying
$f(N)\subset L$, the map $f:N\rightarrow L$ is continuous as well.
It~follows that in this case such a~differentiable structure is
unique. A \emph{foliation of class} $C^r$ (cf. \cite{St}) is a partition
$\mathcal{F}$ of $P$ into weakly imbedded submanifolds, called
leaves, such that the following condition holds. If~$x$~belongs to
a $k$-dimensional leaf, then there is a local chart $(U,\phi)$
with $\phi(x)=0$, and $\phi(U)=V\times W$, where $V$ is open in
$\mathbb{R}^k$, and $W$ is open in $\mathbb{R}^{n-k}$, such that
if~$L\in \mathcal{F}$ then $\phi(L\cap U)=V\times l$, where
$l=\{w\in W| \phi^{-1}(0,w)\in L\}$. A foliation is called
\emph{regular} if all leaves have the same dimension.

\begin{df} \cite{St}
A smooth mapping $\phi$ of a open subset of $\mathbb{R}\times P$
into $P$ is said to be a \emph{$C^r$-arrow}, $1\leq r\leq\omega$, if\\
(1) $\phi(t,\cdot)=\phi_{t}$ is a local $C^r$-diffeomorphism for
each $t$, possibly with empty domain,\\
(2) $\phi_0=\id$ on its domain,\\
(3) $\dom(\phi_t)\subset \dom(\phi_s)$ whenever $0\leq s<t$.
\end{df}

Given an arbitrary set of arrows $\mathcal A$, let $\mathcal A^*$
be the totality of local diffeomorphisms $\psi$ such that $\psi =
\phi(t,\cdot)$ for some $\phi\in \mathcal A$, $t\in \mathbb{R}$.
Next $\hat{\mathcal A}$ denotes the set consisting of all local
diffeomorphisms being finite compositions of elements from
$\mathcal A^*$ or $(\mathcal A^*)^{-1}=\{\psi^{-1}|\psi \in
\mathcal A^*\}$, and of the identity. Then the orbits of
$\hat{\mathcal A}$ are called \emph{accessible} sets of $\mathcal
A$.

For $x\in P$ let $\mathcal A(x)$, $\bar{\mathcal A}(x)$ be the
vector subspaces of $T_x P$ generated by
$$\{\dot{\phi}(t,y)|\phi\in \mathcal A,\phi_t(y)=x\},
\quad \{d_y\psi(v)|\psi\in \hat{\mathcal A},\psi(y)=x,v\in
\mathcal A(y)\},$$ resp. Then we have (\cite{St})
\begin{thm} Let $\mathcal A$ be an arbitrary set of $C^r$-arrows on $P$. Then
\begin{enumerate} \item every accessible set of $\mathcal A$ admits a (unique)
$C^r$-differentiable structure of~a~connected weakly imbedded
submanifold of~$P$; \item the collection of accessible sets
forms a foliation $\mathcal{F}$;  and \item
$\D(\F):=\{\bar{\mathcal A}(x)\}_{x\in P}$ is the tangent distribution of
$\mathcal{F}$.\end{enumerate}
\end{thm}

 Now let $\G\leq \bis^{r}(\gg)$ be an isotopically connected group of
bisections, i.e. any $\sigma\in\G$ can be joined to the identity by a bisection isotopy starting with $\iota_M$.  Let $\A_\G$ be the totality of restrictions of
isotopies $$\R\t \gg\ni(t,\g)\mapsto \sigma_t(\b(\g)).\g\in \gg,\quad \sigma\in I\G,$$   to open
subsets of $\R\t \gg$ in such a way that Definition 3.1 is fulfilled. Then by $\F_\G$ we denote the foliation
defined by the set of arrows $\A_\G$. Observe that $\hat\A_\G=\A_\G$
and, consequently, \begin{equation}
\bar\A_\G(\g)=\A_\G(\g).
\end{equation}
Note that each leaf of $\F_\G$ is weakly imbedded submanifold of some $\a$-fiber of $\gg$.

 Of course, any subgroup  $\G\leq \bis^{r}(\gg)$ determines
uniquely a foliation $\F_\G$. Namely, $\G$ defines uniquely a maximal isotopically connected
subgroup $\G_0$.

It is well known that a Lie groupoid $\gg\rr M$ induces a foliation on $M$, denoted by $\F_\gg$, cf. \cite{Mk}. The leaves of $\F_\gg$ are the images of the connected components of the $\a$-fibers of $\gg$ by the target projection $\b$. Now let $\G\leq\bis^r(\gg)$ and denote by $\A^M_\G$ the collection of restrictions of
isotopies $$\R\t M\ni(t, x)\mapsto \b(\sigma_t(x))\in M,\quad \sigma\in I\G,$$   to open
subsets of $\R\t M$. Then Definition 3.1 and Theorem 3.2 apply. The symbol $\F^M_\G$ stands for the foliation
defined by the set of arrows $\A^M_\G$. We also have $\hat\A^M_\G=\A^M_\G$
and $\bar\A^M_\G(x)=\A^M_\G(x)$.

Given $(x_1\ld x_n)\in M^n$ let $L_i$ be the leaf of $\F^M_\G$ passing through $x_i$. We put
$$
\mathcal R_\G(x_1\ld x_n)=\{(\g_1\ld \g_n)\in\mathcal R(x_1\ld x_n)|\,(\forall i), \b(\g_i)\in L_i\}.
$$

Now we can formulate a generalization of Theorem 1.3.

\begin{thm}
Suppose that $\G\leq\bis^r(\gg)$ satisfies (L)-condition (Definition 1.1), and let $(x_1\ld x_n)\in M^n$ be a pairwise distinct $n$-tuple.  For any $i=1\ld n$ we
assume that either $\dim(L_i)\geq 2$, or $x_j\not\in L_i$ for all $j\neq i$. Then for any $(\g_1\ld \g_n)\in \mathcal R_\G(x_1\ld x_n)$ and for any $V_1\ld V_n$ such that $V_i$ is an open neighborhood of $L_i$ in $M$, $i=1\ld n$, there exists $\sigma\in\G$ such that $\sigma(x_i)=\g_i$ for all $i$
and $\supp(\b\circ\sigma)\s V_1\cup\ldots\cup V_n$ for $1\leq r\leq\infty$ (resp. $\b\circ\sigma$ is sufficiently $C^1$ close to $\iota_M$ on $M\setminus( V_1\cup\ldots\cup V_n)$ for $r=\omega$).
\end{thm}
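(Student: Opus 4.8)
The plan is to mimic the proof of Theorem~1.3, replacing the foliation $\F_\gg$ and the fiberwise transitivity hypothesis by the $\G$-adapted foliation $\F_\G$ on $\gg$ (and its shadow $\F^M_\G$ on $M$) together with Theorem~3.2. The role of the $\a$-fibers $\gg^o_{x_i}$ in Section~2 will now be played by the leaves $\tilde L_i$ of $\F_\G$ through $\iota_M(x_i)$; by the last sentence before Theorem~3.3 each such $\tilde L_i$ is a weakly imbedded submanifold of $\gg_{x_i}$, and $\mathcal R_\G(x_1\ld x_n)$ is, up to the obvious permutation of indices grouping together the $x_i$ lying on a common leaf $L_i$ of $\F^M_\G$, a product of sets of the form $\tilde L_1\t\cdots\t\tilde L_k\setminus\tilde\b^{-1}(\Delta)$ with $L_1=\cdots=L_k=:L$.

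First I would establish connectedness of $\mathcal R_\G(x_1\ld x_n)$. Each leaf $\tilde L_i$ is connected by Theorem~3.2(1), so when the $x_i$ are on pairwise distinct leaves of $\F^M_\G$ the set is already a product of connected sets. When $L_1=\cdots=L_k=L$ with $\dim(L)\geq 2$, the target projection restricts to a surjective submersion $\tilde\b:\tilde L_1\t\cdots\t\tilde L_k\to L^k$ (surjectivity onto the leaf $L=\b(\gg^o_{x_1})$ intersected appropriately — more precisely onto $L^k$ since $\b$ maps $\tilde L_i$ onto $L$, which follows because $\F^M_\G$ is exactly the $\b$-image foliation of $\F_\G$), so $\tilde\b^{-1}(\Delta)$ has codimension $\dim(L)\geq 2$ and Lemma~2.1 applies verbatim with $N=\tilde L_1\t\cdots\t\tilde L_k$ and $E=\tilde\b^{-1}(\Delta)$. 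Hence $\mathcal R_\G(x_1\ld x_n)$ is connected. (When $\dim L_i=0$ the corresponding factor is a single point, which is where the hypothesis $x_j\notin L_i$ for $j\neq i$ is needed, exactly as in Section~2.)

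Next I would set up the analogue of the map $\Phi$. Fix $(\g_1\ld\g_n)\in\mathcal R_\G(x_1\ld x_n)$ and put $m_i=\dim\tilde L_i$ (the dimension of the leaf of $\F_\G$, i.e. of $\F^M_\G$ through $x_i$, which may be strictly smaller than $\dim\gg_{x_i}$ — this is the whole point of dropping fiberwise transitivity). By definition of $\A_\G$ and of $\bar\A_\G(\g)=\A_\G(\g)$ in (3.1), the tangent space $T_{\g_i}\tilde L_i$ is spanned by vectors $\dot\sigma^{i1}(\b(\g_i'))|_{t=0}\ld\dot\sigma^{im_i}(\b(\g_i'))|_{t=0}$ coming from isotopies $\sigma^{ij}\in I\G$, where I momentarily abuse notation writing the base point; after applying the (L)-condition I can assume $\supp(\sigma^{ij}_t)\s U_i'\s V_i$ with $U_i'$ pairwise disjoint neighborhoods of the $\b(\g_i)$ and $\b(\gg^o_{U_i'})\s V_i$, just as in (2.8). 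Then I define $\Phi(t_1\ld t_N)$, $N=m_1+\cdots+m_n$, by the same staircase of $\star$-products applied to the $\b(\g_i)$ and post-multiplied by $\g_i$, exactly as the second displayed $\Phi$ in Section~2. Formulas (2.4)--(2.5) give that $\Jac\Phi(0)$ is block-diagonal with the linearly independent blocks, so (for $r\leq\infty$ directly, for $r=\omega$ after a $C^0$-small perturbation argument as before) $\Phi$ is a diffeomorphism onto an open neighborhood of $(\g_1\ld\g_n)$ inside $\mathcal R_\G(x_1\ld x_n)$; here one uses that $\Phi$ has image in $\tilde L_1\t\cdots\t\tilde L_n$ precisely because each $\sigma^{ij}_{t}(\b(\g_i)).\g_i$ stays in the $\F_\G$-leaf $\tilde L_i$, and that the target constraint $\b(\g_i)\neq\b(\g_j)$ is open. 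This shows that $\mathcal S_\G(x_1\ld x_n):=\{(\g_1\ld\g_n)\in\mathcal R_\G(x_1\ld x_n)\mid(\exists\sigma\in\G)\,\sigma(x_i)=\g_i\}$ is open in $\mathcal R_\G(x_1\ld x_n)$, and the transitivity/symmetry computations for the relation $\sim$ (the two displayed chains of equalities in Section~2, using (2.2) and (2.3)) carry over unchanged to show its complement is open too; connectedness then forces $\mathcal S_\G=\mathcal R_\G$. The support statement $\supp(\b\circ\sigma)\s V_1\cup\ldots\cup V_n$ (resp. $C^1$-closeness to $\iota_M$ outside for $r=\omega$) follows by the same induction on the number of staircase factors as at the end of Section~2, using $\b(\gg^o_{U_i'})\s V_i$.

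The main obstacle is the bookkeeping needed to justify that everything really takes place inside the leaves of $\F_\G$: one must check that $\F^M_\G$ is literally the $\b$-pushforward of $\F_\G$ (so that $L_i=\b(\tilde L_i)$ and $\tilde\b:\tilde L_1\t\cdots\t\tilde L_k\to L^k$ is onto), that the generating vectors $\dot\sigma^{ij}$ chosen with small support still span the full leaf tangent space $\bar\A_\G(\g_i)=\A_\G(\g_i)$ (this is where one invokes that $\A_\G$ is already closed under $\hat{\;\cdot\;}$, equation (3.1), so that localizing the isotopies via (L) does not shrink the accessible tangent space), and that the post-multiplication map $\gg_{x_i}\ni\g\mapsto(\text{something in }\tilde L_i\text{ over }\b(\g_i)).\g_i$ sends an open piece of $\tilde L_i$ through $\g_i$ — i.e. $\mathcal R_\G$ is genuinely a manifold of dimension $N$ near each of its points. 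Once these structural facts about $\F_\G$ are in place, the argument is a line-by-line transcription of Section~2 with $\gg^o_{x_i}$ replaced by $\tilde L_i$ and $\F_\gg$ replaced by $\F^M_\G$.
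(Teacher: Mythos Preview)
Your proposal is correct and follows essentially the same approach as the paper. The paper's own proof is a one-paragraph remark stating that the argument of Theorem~1.3 goes through verbatim once fiberwise transitivity is replaced by the existence, for each $x_i$, of isotopies $\sigma^{i1},\ldots,\sigma^{ik}\in I\G$ with $\dot\sigma^{i1}(x_i)|_{t=0},\ldots,\dot\sigma^{ik}(x_i)|_{t=0}$ linearly independent, where $k$ is the dimension of the $\F_\G$-leaf through $x_i$; this is exactly what you extract from Theorem~3.2(3) together with (3.1), and your careful distinction between the leaves $\tilde L_i\subset\gg$ of $\F_\G$ and their $\b$-images $L_i\subset M$ in $\F^M_\G$ makes explicit a point the paper leaves implicit.
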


The proof of Theorem 3.3 is completely analogous to that of Theorem 1.3. The only difference is that instead of fiberwise transitivity we apply the fact that 
 for any $x_i\in M$ there exist
$k$ isotopies $\sigma^{i1}\ld\sigma^{ik}\in I\G$ such that
$$
\dot\sigma^{i1}(x_i)|_{t=0}\ld\dot\sigma^{ik}(x_i)|_{t=0}
$$
are linearly independent, where $k=k(i)$ is the dimension of the leaf $L_i\in\F_\G$ passing through $x_i$. In fact, this property is a consequence of Theorem 3.2(3) and the equality (3.1).
Note that the leaves $L_i$ are contained in the $M$-component $\gg^o$ of  $\gg$.

\medskip

As a consequence of Theorem 3.3 we get

\begin{cor} Suppose the hypotheses of Theorem 3.3 are fulfilled.   Then for any pairwise distinct $n$-tuple $(y_1\ld y_n)\in M^n$  such that $x_i$ and $y_i$ lie on the same leaf $L_i\in\F^M_\G$ and for any $V_1\ld V_n$ such that $V_i$ is an open neighborhood of $L_i$ in $M$, $i=1\ld n$, there is $\sigma\in\G$ such that $\b(\sigma(x_i))=y_i$ for all $i$
and $\supp(\b\circ\sigma)\s V_1\cup\ldots\cup V_n$ for $1\leq r\leq\infty$ (resp. $\b\circ\sigma$ is sufficiently $C^1$ close to $\iota_M$ on $M\setminus( V_1\cup\ldots\cup V_n)$ for $r=\omega$).
\end{cor}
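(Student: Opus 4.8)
The plan is to deduce Corollary 3.5 directly from Theorem 3.3 by feeding it an appropriately chosen $n$-tuple of arrows. Given the pairwise distinct $n$-tuples $(x_1\ld x_n)$ and $(y_1\ld y_n)$ with $x_i, y_i$ on the same leaf $L_i\in\F^M_\G$, I first want to produce arrows $\g_i\in\gg^o_{x_i}$ with $\b(\g_i)=y_i$. Since $L_i=\b(\gg^o_{x_i})$ is exactly the image of the connected $\a$-fiber component $\gg^o_{x_i}$ under the target projection, and $y_i\in L_i$, there is at least one such $\g_i$; fix one for each $i$. By construction $\g_i\in\gg^o_{x_i}$ and $\b(\g_i)=y_i\in L_i$, so $(\g_1\ld\g_n)$ lies in $\mathcal R_\G(x_1\ld x_n)$ provided we check the off-diagonal condition $\b(\g_i)\neq\b(\g_j)$ for $i\neq j$. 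But $\b(\g_i)=y_i$ and $\b(\g_j)=y_j$, and the $n$-tuple $(y_1\ld y_n)$ is assumed pairwise distinct, so $y_i\neq y_j$; hence $(\g_1\ld\g_n)\in\mathcal R_\G(x_1\ld x_n)$.

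Next I would invoke Theorem 3.3 with this $n$-tuple of arrows and the given neighborhoods $V_1\ld V_n$ of the leaves $L_i$. The hypotheses on $\G$ (the (L)-condition) and on $(x_1\ld x_n)$ (for each $i$, either $\dim(L_i)\geq 2$ or $x_j\notin L_i$ for $j\neq i$) are precisely those assumed in Corollary 3.5, so Theorem 3.3 applies verbatim. It yields a bisection $\sigma\in\G$ with $\sigma(x_i)=\g_i$ for all $i$ and with the support control $\supp(\b\circ\sigma)\s V_1\cup\ldots\cup V_n$ for $1\leq r\leq\infty$ (resp. $\b\circ\sigma$ sufficiently $C^1$ close to $\iota_M$ outside that union for $r=\omega$). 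Applying $\b$ to the identity $\sigma(x_i)=\g_i$ gives $\b(\sigma(x_i))=\b(\g_i)=y_i$, which is exactly the assertion of the corollary, and the support statement is inherited unchanged.

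There is essentially no obstacle here: the only point requiring a word of care is the very first step, namely that $y_i\in L_i$ guarantees the existence of an arrow $\g_i$ over $x_i$ in the right fiber component with target $y_i$ — this is just the definition $L_i=\b(\gg^o_{x_i})$ recalled in the discussion preceding Theorem 3.3. One should also note that the leaves $L_i$ of $\F^M_\G$ are contained in the unit-component $\gg^o$, so $\gg^o_{x_i}$ is the relevant fiber component; this was remarked just after the proof sketch of Theorem 3.3. Everything else is a direct substitution, so the corollary follows immediately.
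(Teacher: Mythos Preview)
Your proposal is correct and follows exactly the paper's own argument: pick $\g_i\in\gg^o_{x_i}$ with $\b(\g_i)=y_i$ using surjectivity of $\b|_{\gg^o_{x_i}}$, observe that the pairwise distinctness of the $y_i$ forces $(\g_1,\ldots,\g_n)\in\mathcal R_\G(x_1,\ldots,x_n)$, and apply Theorem~3.3. If anything you are slightly more explicit than the paper in verifying the off-diagonal condition $\b(\g_i)\neq\b(\g_j)$; the only minor imprecision (shared with the paper) is writing $L_i=\b(\gg^o_{x_i})$, whereas $L_i\in\F^M_\G$ is in general only contained in $\b(\gg^o_{x_i})$, but this inclusion is all that is needed to produce the $\g_i$.
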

Notice that  Corollary 1.4 is a special case of Corollary 3.4.

\begin{proof}  Let $L_i\in\F^M_\G$ be the leaf passing through $x_i$ and $y_i$. Since $\b|_{\gg_{x_i}^o}:\gg_{x_i}^o\to L_i$ is 
surjective we choose $(\g_1\ld\g_n)\in\mathcal R(x_1\ld x_n)$ such that  $\b(\g_i)=y_i$ for all $i$. It suffices to apply Theorem 3.3.
\end{proof}

We have the following improvement of Corollary 1.5.

\begin{cor}
If  $\G\leq\bis^r(\gg)$ satisfies (L)-condition, then for any $\g\in\gg^o$ there is $\sigma\in\G$ such that $\sigma(\a(\g))=\g$. Moreover, $\b\circ\sigma$ can be supported in a sufficienly
small neighborhood $V$ of the leaf $L_{\a(\g)}\in\F_\G^M$ for $1\leq r\leq\infty$ (resp. $\b\circ\sigma$ is sufficiently $C^1$ close to $\iota_M$ on $M\setminus V$ for $r=\omega$).
\end{cor}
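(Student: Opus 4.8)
The plan is to derive Corollary 3.6 directly from Theorem 3.3 by specializing to the case $n=1$. Given an arrow $\g\in\gg^o$, set $x=\a(\g)$ and let $L_x\in\F^M_\G$ be the leaf through $x$. We only deal with a single point, so the ``pairwise distinct'' hypothesis is vacuous, and likewise the dichotomy condition (``$\dim(L_x)\geq 2$ or $x_j\notin L_x$ for $j\neq x$'') is trivially satisfied because there is no index $j\neq 1$. Hence the hypotheses of Theorem 3.3 reduce to just the (L)-condition on $\G$, which is exactly what we are assuming.

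The next step is to check that the single-point tuple $(\g)$ lies in $\mathcal R_\G(x)$. By definition $\mathcal R_\G(x)=\{\g'\in\gg^o_x\mid \b(\g')\in L_x\}$, so I need $\g\in\gg^o_x$ and $\b(\g)\in L_x$. The first holds since $\g\in\gg^o$ and $\a(\g)=x$, so $\g\in\gg^o\cap\a^{-1}(x)=\gg^o_x$. For the second, recall from the discussion preceding Theorem 3.3 that the leaves of $\F^M_\G$ are obtained as images under $\b$ of accessible sets in $\gg$; more concretely, $\b|_{\gg^o_x}:\gg^o_x\to L_x$ is surjective (this surjectivity was already used in the proof of Corollary 3.4, and follows because $\gg^o_x$ is connected and $\b$ maps it onto the leaf of $\F_\gg$, which contains $L_x$). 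In any case $\b(\g)$ is the endpoint of an arrow in $\gg^o_x$, hence lies in $L_x$. Therefore $(\g)\in\mathcal R_\G(x)$.

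Finally I apply Theorem 3.3 with $n=1$: choosing any open neighborhood $V$ of $L_x$ in $M$, there is $\sigma\in\G$ with $\sigma(x)=\g$ and $\supp(\b\circ\sigma)\s V$ in the case $1\leq r\leq\infty$, respectively $\b\circ\sigma$ sufficiently $C^1$-close to $\iota_M$ on $M\setminus V$ in the case $r=\omega$. Since $V$ can be taken an arbitrarily small neighborhood of the leaf $L_{\a(\g)}$, this is precisely the assertion of Corollary 3.6.

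There is essentially no obstacle here; the statement is a clean corollary and the only point requiring a word of care is the verification that $\b(\g)\in L_{\a(\g)}$, i.e. that the foliation $\F^M_\G$ is fine enough to contain the relevant endpoints — but this is built into the construction of $\F^M_\G$ via the arrows $\A^M_\G$ and the surjectivity of $\b|_{\gg^o_x}$ onto the $\F_\gg$-leaf, exactly as exploited in Corollary 3.4. (One should also note, as remarked after Theorem 3.3, that $L_{\a(\g)}$ is contained in $\gg^o$, so the neighborhood $V$ is genuinely a neighborhood of a leaf of the foliation $\F^M_\G$ and the statement is not vacuous.)
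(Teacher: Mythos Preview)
Your overall strategy---specialize Theorem 3.3 to $n=1$---is exactly what the paper intends; the corollary is stated without proof as a direct consequence, and your handling of the vacuous ``pairwise distinct'' and ``$\dim L_i\geq 2$ or \ldots'' hypotheses for a single point is correct.

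The gap is in your verification that $(\g)\in\mathcal R_\G(x)$, i.e.\ that $\b(\g)\in L_x$ where $L_x$ is the $\F^M_\G$-leaf through $x=\a(\g)$. You invoke the surjectivity of $\b|_{\gg^o_x}\colon\gg^o_x\to L_x$ from the proof of Corollary 3.4, but surjectivity gives $L_x\subseteq\b(\gg^o_x)$, whereas what you need is the reverse inclusion $\b(\gg^o_x)\subseteq L_x$. Your own parenthetical remark already notes that $\b$ maps $\gg^o_x$ onto the $\F_\gg$-leaf, which \emph{contains} $L_x$; when $\G$ is not fiberwise transitive the $\F^M_\G$-leaf $L_x$ may be strictly smaller than the $\F_\gg$-leaf, so $\b(\g)$ need not lie in $L_x$. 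The sentence ``$\b(\g)$ is the endpoint of an arrow in $\gg^o_x$, hence lies in $L_x$'' is therefore a non sequitur. In fact this gap cannot be closed from the (L)-condition alone: take $\gg=S^1\times G$ (a trivial group bundle, $\a=\b=\pr_{S^1}$) and $\G=C^\infty(S^1,H)$ for a proper connected Lie subgroup $H<G$; then $\G$ satisfies (L) (localize via a bump function and the exponential of $H$), yet $\sigma(x)\in\{x\}\times H$ for every $\sigma\in\G$ while $\gg^o_x=\{x\}\times G$. So the corollary as literally stated appears to need an extra hypothesis ensuring that the $\F_\G$-orbit of each unit $x$ fills out all of $\gg^o_x$, and your proof does not supply it.
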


\section{Regularity and the (L)-condition}

In the remaining sections we confine ourselves to the $C^\infty$-class of smoothness.  However, by applying arguments similar to those from \cite{MV} the facts and examples
presented below hold in the real-analytic case.

Let $\gg$ be a Lie groupoid and $A(\gg)$ its Lie algebroid. Let $\bis_c^\infty(\gg)$ designate the group of all compactly supported bisections of $\gg$. Next, denote by $\sect_c(A(\gg))$ the Lie algebra of all compactly supported  sections of $A(\gg)$, and by $I\sect_c(A(\gg))$ the space of all
smooth curves $\R\to\sect_c(A(\gg))$ (for the calculus on locally convex spaces, see \cite{KM} or \cite{Ne}). Note that to any bisection isotopy $\sigma\in I\bis_c^\infty(\gg)$ we can assign $X=\{X_t\}\in I\sect_c(A(\gg))$
by the following formula
\begin{equation}
X_t(x)=(R_{\sigma_t((\b\circ\sigma_t)^{-1}(x))})^*\left({\partial\sigma_s\over\partial s}((\b\circ\sigma_t)^{-1}(x))\Big|_{s=t}\right), \quad x\in M,
\end{equation}
where $R_u:\gg_x\to\gg_y$, with $\a(u)=y$ and $\b(u)=x$, is the right translation. Here $y=(\b\circ\sigma_t)^{-1}(x)$ and $u=\sigma_t(y)$. It was shown in 
\cite{Ryb2} or in \cite{SW} that the (right) evolution operator
$$\Evol^r_{\bis^\infty(\gg)}:I\sect_c(A(\gg))\ni X\mapsto\sigma\in  I\bis_c^\infty(\gg)$$
determined by (4.1) is a bijection. Moreover, the mapping
$$\evol^r_{\bis^\infty(\gg)}:I\sect_c(A(\gg))\ni X\mapsto\sigma_1\in \bis^\infty(\gg)$$
is smooth. This means that the Lie group $\bis_c^\infty(\gg)$ is \emph{regular}.

More generally, we say that a subgroup $\G\leq\bis_c^\infty(\gg)$ is \emph{regular} if there exists a closed Lie subalgebra $\L(\G)\leq\sect_c(A(\gg))$
such that defining  the  evolution operator for $\G$ as the restriction $$\Evol^r_{\G}=\Evol^r_{\bis^\infty(\gg)}\big|_{I\L(\G)},$$
where  $I\L(\G)$ stands for the space of all smooth maps $\R\to \L(\G)$, we obtain that the image of $\Evol^r_{\G}$
is in $I\G$, such that the restricted operator
$$\Evol^r_{\G}:I\L(\G)\ni X\mapsto\sigma\in  I\G$$
 is a bijection. Here we do not assume that $\G$ carries a Lie group structure.

\begin{prop}
Under the above assumption, let $\L(\G)$ possess the property: for any open subsets $U, V\s M$ with $\overline U\s V$ and $X\in I\L(\G)$ there exists $Y\in I\L(\G)$ such that
$X_t=Y_t$ on $U$ and $\supp(Y_t)\s V$ for all $t$. Then $\G$ obeys (L)-condition.
\end{prop}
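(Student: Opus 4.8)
The plan is to unwind the definitions of the two evolution operators and of the (L)-condition, and to show that the stated property of $\L(\G)$ transports, via $\Evol^r_\G$, to the localization property required of $I\G$ in Definition 1.1. Concretely, suppose we are given open, relatively compact $U,V\s M$ with $\overline U\s V$ and an isotopy $\sigma\in I\G$ that is sufficiently $C^1$-close to $\iota_M$. Since $\G$ is regular with associated Lie subalgebra $\L(\G)$, there is a unique $X=\{X_t\}\in I\L(\G)$ with $\sigma=\Evol^r_\G(X)$, and $\sigma$ being $C^1$-close to $\iota_M$ corresponds, by smoothness of $\Evol^r_{\bis^\infty(\gg)}$ and its inverse, to $X$ being small in the appropriate sense. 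Apply the hypothesis on $\L(\G)$ to this $X$, obtaining $Y\in I\L(\G)$ with $Y_t=X_t$ on $U$ and $\supp(Y_t)\s V$ for all $t$. Put $\tau=\Evol^r_\G(Y)\in I\G$. The claim is that $\tau$ satisfies the requirements of (L).

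The key step is then to verify that (i) $\tau_t=\sigma_t$ on $U$, and (ii) $\supp(\tau_t)\s V$. Both follow from the \emph{locality of the evolution operator}: the value of $\Evol^r_{\bis^\infty(\gg)}(X)_t$ over a point $x\in M$ depends only on the germ — indeed, here, because we flow along $\a$-fibers, on the values over a neighborhood of $x$ in $M$ — of $X$ near $x$, since $X_t(x)$ via formula (4.1) is a section of $A(\gg)$ and the resulting bisection isotopy is governed by the ODE $\partial_s\sigma_s(y)|_{s=t}=R_{\sigma_t(y)}(X_t((\b\circ\sigma_t)(y)))$ whose solution at a unit $x$ only sees $X$ along the orbit of $x$. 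Since $X$ and $Y$ agree on $U$ and $U$ is saturated enough that the flows stay inside it (one uses here that $\supp(Y_t)\s V$ keeps $\b\circ\tau_t$ from moving points of $U$ outside the region where $X=Y$, shrinking $U$ slightly if necessary as in the argument of Section 2), we get $\tau_t=\sigma_t$ on $U$; and on $M\setminus V$ the vector field $Y_t$ vanishes, so $\tau_t$ is the identity there, giving $\supp(\tau_t)\s V$. For $r=\omega$ one argues analogously with "agrees on $U$" replaced by "$C^1$-close on $U$" and "supported in $V$" by "$C^1$-close to $\iota_M$ outside $V$", invoking continuity of $\Evol^r$.

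Finally, I would observe that $\tau$ indeed lies in $I\G$, not merely in $I\bis^\infty(\gg)$: this is exactly the content of the regularity hypothesis, namely that $\Evol^r_\G=\Evol^r_{\bis^\infty(\gg)}|_{I\L(\G)}$ has image in $I\G$, and since $Y\in I\L(\G)$ we are done. Assembling these observations yields that $\G$ satisfies (L)-condition.

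I expect the main obstacle to be pinning down precisely the sense of "locality of the evolution operator" — that is, justifying rigorously that agreement of the time-dependent algebroid sections $X$ and $Y$ on an open set $U\s M$ forces agreement of the corresponding bisection isotopies over $U$, after possibly shrinking $U$ to a slightly smaller open set whose $\b\circ\tau_t$-images stay within the original $U$. This is a uniqueness-of-solutions statement for the defining ODE (4.1) along $\a$-fibers, and it requires care because $\b\circ\sigma_t$ genuinely moves points of $M$ around; one must control, using $\overline U\s V$ and the support condition on $Y$, that no point of the shrunken $U$ is transported by the isotopy into the region where $X\ne Y$ within the relevant time interval. Once that bookkeeping is done, the rest is a direct translation through the bijections $\Evol^r_\G$ and $\Evol^r_{\bis^\infty(\gg)}$.
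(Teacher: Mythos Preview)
Your approach is exactly the ``straightforward'' argument the paper has in mind: pull the isotopy back along $\Evol^r_\G$, localize the time-dependent section using the hypothesis, and push forward again. There is one minor correction to your bookkeeping in step~(i). You propose \emph{shrinking $U$} to ensure that the $\b$-trajectory of points of $U$ stays inside the region where $X=Y$. That is backwards: the (L)-condition fixes $U$ in advance, so you cannot shrink it. Instead, choose an intermediate open set $U'$ with $\overline U\subset U'\subset\overline{U'}\subset V$ and apply the hypothesis on $\L(\G)$ to the pair $(U',V)$, obtaining $Y$ with $X_t=Y_t$ on $U'$ and $\supp(Y_t)\subset V$. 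Now use that $\sigma$ is sufficiently $C^1$-close (hence $C^0$-close) to $\iota_M$: since $U$ is relatively compact with $\overline U\subset U'$, one can arrange that $\b(\sigma_s(x))\in U'$ for all $x\in U$ and all $s\in[0,1]$. On $U'$ the right-invariant vector fields generated by $X$ and $Y$ agree along those trajectories, so uniqueness for the ODE (4.1) yields $\tau_t=\sigma_t$ on $U$, as required. Your argument for (ii) is fine: for $x\notin V$ the constant curve at the unit $1_x$ solves the ODE because $Y_t(x)=0$, and uniqueness gives $\tau_t(x)=1_x$.
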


The proof is straightforward. 

Therefore we have

\begin{cor} For an arbitrary Lie groupoid $\gg$ the group $\bis^\infty_c(\gg)$ (and \emph{a fortiori} $\bis^\infty(\gg)$) fulfills Theorem 1.3. In particular, for every $\g\in\gg^o$
there is $\sigma\in\bis^\infty_c(\gg)$ with $\sigma(\a(\g))=\g$ such that $\b\circ\sigma$ is supported in an arbitrarily small  neighborhood of $L_{\a(\g)}$.  
\end{cor}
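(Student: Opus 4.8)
The plan is to verify that the group $\bis^\infty_c(\gg)$ fits into the regularity framework of Proposition 4.1 with $\L(\G)=\sect_c(A(\gg))$, and then invoke Theorem 1.3. First I would recall that, as stated just above the proposition, the evolution operator $\Evol^r_{\bis^\infty(\gg)}:I\sect_c(A(\gg))\to I\bis^\infty_c(\gg)$ defined by (4.1) is a bijection (by \cite{Ryb2,SW}); hence $\bis^\infty_c(\gg)$ is regular in the sense introduced, taking $\L(\G)$ to be all of $\sect_c(A(\gg))$, which is trivially a closed Lie subalgebra of itself.

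Next I would check the hypothesis of Proposition 4.1: given open $U,V\s M$ with $\overline U\s V$ and $X\in I\sect_c(A(\gg))$, I need $Y\in I\sect_c(A(\gg))$ with $Y_t=X_t$ on $U$ and $\supp(Y_t)\s V$. This is immediate: pick a bump function $\lambda\in\ccm$ with $\lambda\equiv 1$ on a neighborhood of $\overline U$ and $\supp(\lambda)\s V$, and set $Y_t=\lambda\cdot X_t$ (multiplying a section of $A(\gg)$ by a function on $M$ again gives a section, and smoothness in $t$ is preserved). Thus Proposition 4.1 applies and $\bis^\infty_c(\gg)$ obeys the (L)-condition. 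Since $\bis^\infty_c(\gg)\leq\bis^\infty(\gg)$ and the extra freedom of non-compact support only makes the (L)-condition easier to satisfy, $\bis^\infty(\gg)$ obeys it as well (``a fortiori''). One should also note $\bis^\infty_c(\gg)$ is isotopically connected, so $\G_0=\G$ and the foliation $\F_\G^M$ coincides with $\F_\gg$; in particular the fiberwise transitivity of Theorem 1.3 also holds here because by (4.1) every element of $\sect_c(A(\gg))$, and hence every tangent vector to an $\a$-fiber, is realized as $\dot\sigma(x)|_{t=0}$ for a suitable isotopy, so Theorem 1.3 applies directly (alternatively one just cites Theorem 3.3).

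For the ``in particular'' clause, I would apply Corollary 1.5 (or its strengthening Corollary 3.6) to the group $\G=\bis^\infty_c(\gg)$: given $\g\in\gg^o$, there is $\sigma\in\bis^\infty_c(\gg)$ with $\sigma(\a(\g))=\g$ and $\b\circ\sigma$ supported in an arbitrarily small neighborhood $V$ of the leaf $L_{\a(\g)}$. This is exactly the content of Corollary 1.5 once we know the hypotheses on $\G$ are met, which was established in the previous paragraph.

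I do not expect any serious obstacle here; the statement is labelled a corollary precisely because everything has been prepared. The only point requiring a line of care is confirming that the cutoff $Y_t=\lambda X_t$ stays in $I\L(\G)$ when $\L(\G)$ is a proper subalgebra — but for the case at hand $\L(\G)=\sect_c(A(\gg))$ is the full algebra and this is trivial, which is why the proof of Proposition 4.1 was called ``straightforward''. The mildly delicate conceptual point worth a sentence is that a leaf of $\F_\gg$ of dimension one still satisfies the hypothesis of Theorem 1.3 in the single-point case $n=1$ vacuously (there is no $j\neq i$), so the one-dimensional-fiber case causes no trouble for the existence-of-a-bisection-through-$\g$ conclusion.
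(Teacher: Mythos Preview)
Your proposal is correct and follows essentially the same route as the paper: the (L)-condition is obtained from Proposition~4.1 via a bump-function cutoff on $\sect_c(A(\gg))$, and fiberwise transitivity is checked by choosing sections $X_1,\ldots,X_k\in\sect_c(A(\gg))$ with $X_1(x),\ldots,X_k(x)$ a basis of $A_x(\gg)$ and exponentiating them to isotopies, after which Theorem~1.3 (and Corollary~1.5 for the ``in particular'' clause) applies. The paper's proof is simply terser, but the ideas are identical.
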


\begin{proof}

In view of Theorem 3.1 it suffices to show that $\bis^\infty_c(\gg)$ is fiberwise transitive. Let $x\in M$ and $k=\dim(\gg_x)$. Take $X_1\ld X_k\in \sect_c (A(\gg))$
such that $X_1(x)\ld X_k(x)$ is a basis of $A_x(\gg)$. Then $\exp(X_1)\ld\exp(X_k)$ satisfy  Definition 1.2.
\end{proof}

\section{Symplectic groupoids and Lagrangian bisections}

Let us recall the notion of symplectic groupoid (\cite{We}, \cite{CDW}).

\begin{df} 
 A Lie groupoid $\gg$ equipped with a symplectic
form $\o$ is called \emph{symplectic} if the graph of multiplication $m$,
$\graph(m)$, is a Lagrangian submanifold of $(-\gg)\t\gg\t\gg$.
Here $-\gg$ denotes the symplectic manifold $(\gg,-\o)$.
\end{df}

Then we have

\begin{thm}(\cite{We}, \cite{CDW}, \cite{Vai}) If $(\gg,\o)$ is a symplectic groupoid then:
\begin{enumerate}
\item The inversion $i$ is an antisymplectomorphism (i.e. $i^*\o=-\o$),
and $M$ is a Lagrangian submanifold of $\gg$.

\item The foliations by fibers of $\a$ and of $\b$ are $\o$-orthogonal.

\item The smooth functions on $\gg$ constant on $\a$-fibers  and  the smooth functions on $\gg$ constant on $\b$-fibers  commutes.

\item The space of units $M$ admits a natural Poisson structure $\Lambda$ such that  $\a$ (resp. $\b$) is a Poisson morphism (resp. anti-morphism) of $(\gg,\o)$ onto
the Poisson manifold  $(M,\Lambda)$.

\item   If $\gg$ is $\a$-connected, then the   symplectic foliation $\F_{\Lambda}$ of  $(M,\Lambda)$ coincides with the foliation $\F_{\gg}$ induced by the groupoid structure.

\end{enumerate}

Such a groupoid is usually denoted by $(\gg,\o)\rightrightarrows (M,\Lambda)$.
\end{thm}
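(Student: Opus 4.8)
The plan is to run everything off the single input of Definition 5.1, that $\graph(m)=\{(gh,g,h)\}$ is Lagrangian in $((-\gg)\t\gg\t\gg,\,-\o\oplus\o\oplus\o)$. The strategy is to cut out inside $\graph(m)$ the submanifolds coming from the groupoid identities (units, inverses) and to use that every submanifold of a Lagrangian is isotropic, so that differentiating these loci converts the groupoid axioms into pointwise linear-symplectic identities. First I would record the dimension count: as $\a,\b$ are submersions, $\graph(m)$ is diffeomorphic to the manifold of composable pairs, of dimension $2\dim\gg-\dim M$, while being Lagrangian forces dimension $\tfrac32\dim\gg$; hence $\dim M=\tfrac12\dim\gg=:n$. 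For (1), imposing isotropy of $\graph(m)$ along the locus $\{(h,\iota_M(\b(h)),h)\}$ on a tangent vector $(v,\,d\iota_M\,d\b\,v,\,v)$ cancels the two outer terms and leaves $\o(d\iota_M\,d\b\,v,\,d\iota_M\,d\b\,v')=0$; since $d\b$ is onto this reads $\iota_M^*\o=0$, so $M$ is isotropic, hence Lagrangian by the dimension count. Feeding this back into isotropy of $\graph(m)$ along $\{(\iota_M(\b(g)),g,g^{-1})\}$ at $(d\iota_M\,d\b\,v,\,v,\,di\,v)$ kills the first term and yields $\o(v,v')+(i^*\o)(v,v')=0$, i.e. $i^*\o=-\o$.

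The crux is (2), the $\o$-orthogonality of $\ker d\a$ and $\ker d\b$; as each has dimension $n$ and so do their symplectic orthogonals, it is enough to prove $\ker d\a_g\perp_\o\ker d\b_g$ for every $g$, which then forces $(\ker d\a_g)^\o=\ker d\b_g$. Here I would differentiate $m$ at the composable pair $(\iota_M(\b(g)),g)$, whose product is $g$. From $m(\iota_M(\b(h)),h)=h$ and the fact that right multiplication by the fixed arrow $g$ is a diffeomorphism $R_g$ of source fibres, one obtains $dm_{(\iota_M(\b(g)),g)}(a,u)=dR_g\bigl(a-d\iota_M(d\b\,u)\bigr)+u$ for composable $(a,u)$, where $dR_g$ maps $\ker d\a_{\iota_M(\b(g))}$ isomorphically onto $\ker d\a_g$. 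Writing isotropy of $\graph(m)$ at the vectors $(dm(a,u),a,u)$, restricting to $a,a'\in\ker d\a_{\iota_M(\b(g))}$ and $u,u'\in\ker d\b_g$, and then setting $a'=0$, every surviving term collapses to the single cross term $\o(dR_g\,a,u')$; hence $\o(\ker d\a_g,\ker d\b_g)=0$. This bilinear bookkeeping, together with the derivation of the $dm$ formula, is where the care is needed, and I expect it to be the main obstacle.

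With (2) available, (3) is immediate: an $\a$-basic function $f$ has $df$ vanishing on $\ker d\a$, so its Hamiltonian field satisfies $X_f\in(\ker d\a)^\o=\ker d\b$, and symmetrically $X_h\in\ker d\a$ for a $\b$-basic $h$; therefore $\{f,h\}=\o(X_f,X_h)=0$, because $\ker d\b=(\ker d\a)^\o$. For (4) I would define $\{\bar f,\bar g\}_M$ by $\a^*\{\bar f,\bar g\}_M=\{\a^*\bar f,\a^*\bar g\}$, the only point being that $\{\a^*\bar f,\a^*\bar g\}$ is $\a$-basic; this follows from $d\{f,g\}(V)=\o([X_f,X_g],V)$ with $X_f,X_g\in\ker d\b$ (involutive, so $[X_f,X_g]\in\ker d\b$) and $V\in\ker d\a=(\ker d\b)^\o$, which gives $0$. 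The Poisson axioms are inherited from $\gg$, $\a$ is Poisson by construction, and $\b$ is anti-Poisson because $i$ interchanges $\a$ and $\b$ and satisfies $i^*\o=-\o$ by (1).

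Finally, for (5) I would compare the two integrable distributions at a unit. Since $\a$ is Poisson, $\La^\sharp_x(d\bar f)=d\a_{\iota_M(x)}\,X_{\a^*\bar f}(\iota_M(x))$, and as $\bar f$ varies $X_{\a^*\bar f}(\iota_M(x))$ sweeps $\ker d\b_{\iota_M(x)}$ — it is $\o^\sharp$ of the annihilator of $\ker d\a_{\iota_M(x)}$ — so the image of $\La^\sharp_x$ equals $d\a_{\iota_M(x)}(\ker d\b_{\iota_M(x)})$. On the other hand the leaf $\b(\gg^o_x)$ of $\F_\gg$ has tangent space $d\b_{\iota_M(x)}(\ker d\a_{\iota_M(x)})$ at $x$. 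These agree because $i$ interchanges $\ker d\a_{\iota_M(x)}$ and $\ker d\b_{\iota_M(x)}$ while $d\a\ci di=d\b$, whence $d\a(\ker d\b)=d\b(\ker d\a)$. Thus the characteristic distribution of $\La$ coincides with the tangent distribution of $\F_\gg$, and since $\a$-connectedness makes the leaves of $\F_\gg$ exactly the connected integral manifolds $\b(\gg^o_x)$, we conclude $\F_\La=\F_\gg$.
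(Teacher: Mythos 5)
The paper does not actually prove this theorem: it is imported verbatim from the cited references (Weinstein, Coste--Dazord--Weinstein, Vaisman), so there is no in-paper argument to compare yours with. Your route --- extracting every assertion from the isotropy of suitable submanifolds of the Lagrangian $\graph(m)\subset(-\gg)\times\gg\times\gg$ --- is exactly the classical CDW argument, and almost all of it checks out: the dimension count, the derivation of $\iota_M^*\o=0$ and $i^*\o=-\o$ from the unit and inversion loci, the formula $dm_{(\iota_M(\b(g)),g)}(a,u)=dR_g\bigl(a-d\iota_M(d\b\,u)\bigr)+u$ (which is correct: split a composable pair as $(a-d\iota_M(d\a\,a),0)+(d\iota_M(d\a\,a),u)$ and use linearity of $dm$), the resulting orthogonality $\o(\ker d\a_g,\ker d\b_g)=0$, and the deduction of (3) and (5) from (2) and (4).

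The one genuine gap is in (4). You define $\{\bar f,\bar g\}_M$ by requiring $\a^*\{\bar f,\bar g\}_M=\{\a^*\bar f,\a^*\bar g\}$, and justify this by showing $d\{\a^*\bar f,\a^*\bar g\}$ annihilates $\ker d\a$. That only makes $\{\a^*\bar f,\a^*\bar g\}$ \emph{locally} constant along $\a$-fibers; if an $\a$-fiber is disconnected (the theorem does not assume $\a$-connectedness except in item (5)), the function need not be a pullback by $\a$ and your bracket is not yet defined. The standard repair is a further use of multiplicativity: show that $X_{\a^*\bar f}$ is the left-invariant vector field with value $\o^\sharp(d\a^*d\bar f)$ along $M$, whence $d\a(X_{\a^*\bar f})$ at $h$ depends only on $\a(h)$ and $\{\a^*\bar f,\a^*\bar g\}=d\bar g\bigl(d\a(X_{\a^*\bar f})\bigr)$ is genuinely constant on each fiber. (Alternatively, define $\{\bar f,\bar g\}_M=\iota_M^*\{\a^*\bar f,\a^*\bar g\}$, verify the Poisson axioms on the open subgroupoid $\gg^o$, and then still invoke invariance to get that $\a$ is Poisson on all of $\gg$.) Everything else in your write-up, including the anti-Poisson property of $\b$ via $i^*\o=-\o$ and the identification of the characteristic distribution $d\a(\ker d\b)=d\b(\ker d\a)$ along $M$ in (5), is sound.
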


Now we need another definition of bisection, equivalent to the previous one. A bisection of a Lie groupoid $\gg$  is a submanifold $B$ of $\gg$ such that $\a|_B$ and $\b|_B$
are diffeomorphisms of $B$ onto $M$. The the set of bisections $\bis(\gg)$ is endowed the the following group product
\begin{equation}
B_1.B_2=\{\g_1.\g_2|\,(\g_1,\g_2)\in B_1\t B_2, \a(\g_1)=\b(\g_2)\}.
\end{equation}
Note that the equalities
$$ \sigma=(\a|_B)^{-1},\quad B=\sigma(M),
$$
give a bijective correspondence $\sigma\mapsto B$ between the old and new notions of bisection, and the group law (2.2) corresponds to (5.1). 
Observe that the set of all Lagrangian bisections $\bis(\gg,\Omega)$ is a subgroup
of $\bis^\infty(\gg)$.

\begin{exa}
\begin{enumerate}

\item The pair groupoid $\gg=X\t X$ with $(-\o)\oplus\o$ is a
symplectic groupoid. Then $\bis(\gg,\o)=\symp(X)$.

\item If $M$ is a manifold then $T^*M$, where $m$ is the addition in
fibers and $\pi_M=\a=\b$, is a Lie groupoid. $T^*M$ endowed
with the canonical symplectic form $\o_M=-d\la_M$ is also a symplectic
groupoid. In fact, the graph of $m$
$$
\graph(m)=\{(x_3,x_2,x_1):\, x_1+x_2-x_3=0\}.
$$
This is the image of $\mathcal N\Delta_M$, the normal bundle  of the diagonal
$\Delta_M\s M^3$ in $(T^*M)^3$, into $(T^*M)^3$ by the mapping
$(x_3,x_2,x_1)\mt(-x_3,x_2,x_1)$. Notice that $\mathcal N\Delta_M$ is
Lagrangian in $(T^*M)^3$, and the mapping is symplectic. So $T^*M$
is indeed a symplectic groupoid.

Note that $\bis(T^*M,\o)$ is the space of all closed 1-forms on $M$.
\item Let $\gg\rightrightarrows M$ be a Lie groupoid.
Then the cotangent space $T^*\gg$ equipped with 
 $\o_{\gg}=-d\lambda_{\gg}$ carries a structure of symplectic groupoid with $\mathcal N^*M$,
the conormal bundle of $M$ in $\gg$, being the space of units.
Furthermore, the canonical projection $p:T^*\gg\r\gg$ is an epimorphism
of groupoids.

\item If $G$ is a Lie group, $T^*G$ admits two symplectic groupoid
structures. First one is given as above, and the second is the
structure of transformation groupoid, where $G$ acts on $\frak g^*$
by the coadjoint action. Since these structures fulfill a compatibility
condition, $T^*G$ carries a structure of \emph{ double groupoid}, cf. \cite{CDW}, \cite{Mk}.
\end{enumerate}
\end{exa}

\begin{df} A Lie algebroid $(T^*M,\{,\}, \rho)$ over $M$ is called \emph{symplectic} if the following conditions are satisfied:

(i) the (2,0)-tensor $\Lambda$ given by $\Lambda(\theta_1,\theta_2)=\theta_2(\rho(\theta_1))$, for all 1-forms $\theta_1, \theta_2$ on $M$, is antisymmetric; and

(ii) the space of all closed 1-forms is a Lie subalgebra of $(\sect(T^*M), \{,\})$.
\end{df}

Clearly there is a one-to-one correspondence between symplectic algebroids over $M$ and Poisson structures on $M$.
Here $\rho=\Lambda^\sharp$ and
$$ \{\theta_1,\theta_2\}=\iota(\Lambda^\sharp(\theta_1))\dd\theta_2-\iota(\Lambda^\sharp(\theta_2))\dd\theta_1+\dd\Lambda(\theta_1,\theta_2).
$$
Then we have (see \cite{CDW}, \cite{Mk})
\begin{prop} 
If $(\gg,\o)\rr(M,\Lambda)$ is a symplectic groupoid then its associated algebroid $A(\gg)$ is identified with $(T^*M, \{,\}, \Lambda^\sharp)$, the symplectic algebroid of $(M,\Lambda)$. In particular, $T^*M\cong T\gg|_M/TM\cong \ker T\b$.
\end{prop}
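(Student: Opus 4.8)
The plan is to build the identification in three stages: first a vector-bundle isomorphism $A(\gg)\cong T^*M$ coming from the Lagrangian character of the units, then a matching of anchors, and finally a matching of brackets, this last step being the real content.

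First I would invoke Theorem 5.2(1), which says that $M$ is a Lagrangian submanifold of $(\gg,\o)$. For any Lagrangian submanifold $L$ of a symplectic manifold $(P,\o)$ the bundle map $v\mapsto(\iota_v\o)|_{TL}$ kills exactly $TL$ along $L$, and hence descends to an isomorphism $TP|_L/TL\cong T^*L$; with $L=M$ and $P=\gg$ this gives $T\gg|_M/TM\cong T^*M$. Since $\b\ci\iota_M=\id_M$, the map $T\b$ restricts to the identity on $TM$, so at each unit $TM\cap\ker T\b=0$ and, by a dimension count, $T\gg|_M=TM\oplus\ker T\b|_M$; thus the projection identifies $\ker T\b|_M$ with the quotient, and we obtain $T^*M\cong T\gg|_M/TM\cong\ker T\b|_M=A(\gg)$, which is the ``in particular'' clause. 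In the right-invariant convention of (4.1) one has $A(\gg)=\ker T\b|_M$ with anchor $\rho=T\a|_{A(\gg)}$.

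Next I would identify the anchor. A covector $\theta\in T_x^*M$ corresponds under the above isomorphism to the unique $v\in\ker T_x\b$ with $(\iota_v\o)|_{T_xM}=\theta$. For $\theta=\dd f(x)$ I would consider the Hamiltonian vector field $X_{\a^*f}$, defined by $\iota_{X_{\a^*f}}\o=\dd(\a^*f)$. By Theorem 5.2(3) we have $\{\a^*f,\b^*g\}_\gg=0$ for all $g$, which forces $T\b(X_{\a^*f})=0$, i.e. $X_{\a^*f}\in\ker T\b$; moreover $\iota_{X_{\a^*f}}\o$ and $\iota_v\o$ agree on $T_xM$ (because $T\a|_{TM}=\id$), so $v-X_{\a^*f}$ lies in the $\o$-orthogonal complement of $T_xM$, which equals $T_xM$ as $M$ is Lagrangian. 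Since $TM\cap\ker T\b=0$ this gives $v=X_{\a^*f}$ at $x$. Finally the Poisson-morphism property of $\a$ (Theorem 5.2(4)) yields $T\a(X_{\a^*f})=\Lambda^\sharp(\dd f)$, whence $\rho(v)=\Lambda^\sharp\theta$, exactly as needed; this also shows that the anchor is $+\Lambda^\sharp$ rather than its opposite.

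The main obstacle is to show that the Lie-algebroid bracket on $\sect(A(\gg))$ is carried to the cotangent bracket $\{,\}$ displayed before the statement. Both are Lie-algebroid brackets over the common anchor $\Lambda^\sharp$, so by the Leibniz rule it suffices to compare them on the $\CC^\infty(M)$-generating family of exact forms $\dd f$. On the cotangent side $\{\dd f,\dd g\}=\dd\Lambda(\dd f,\dd g)=\dd\{f,g\}_\Lambda$, the first two terms vanishing. On the groupoid side the section attached to $\dd f$ is $v_f=X_{\a^*f}|_M$ by the previous paragraph; the delicate point, which is precisely where the Lagrangian condition on $\graph(m)$ (equivalently, the multiplicativity of $\o$) enters, is that $X_{\a^*f}$ is right-invariant and therefore equals the right-invariant extension of $v_f$. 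Granting this, the standard symplectic identity $[X_h,X_k]=X_{\{h,k\}_\gg}$ together with $\{\a^*f,\a^*g\}_\gg=\a^*\{f,g\}_\Lambda$ gives $[X_{\a^*f},X_{\a^*g}]=X_{\a^*\{f,g\}_\Lambda}$, whose restriction to $M$ corresponds to $\dd\{f,g\}_\Lambda$. Hence the two brackets agree on exact forms, and by the derivation property on all of $\sect(T^*M)$; combined with the anchor computation this exhibits $A(\gg)$ and $(T^*M,\{,\},\Lambda^\sharp)$ as isomorphic Lie algebroids. Verifying the right-invariance of $X_{\a^*f}$ from multiplicativity is the Coste--Dazord--Weinstein computation, so one may alternatively cite \cite{CDW}, \cite{Mk} for that step.
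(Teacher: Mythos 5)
The paper does not actually prove this proposition: it is stated with the attribution ``(see \cite{CDW}, \cite{Mk})'' and used as a known structural fact, so there is no in-text argument to compare yours against. Your proof is a faithful reconstruction of the standard Coste--Dazord--Weinstein argument, and its architecture --- bundle isomorphism $T\gg|_M/TM\cong T^*M$ from the Lagrangian condition on $M$ together with $TM\cap\ker T\b=0$, identification of the anchor via $X_{\a^*f}$ using Theorem 5.2(3) and the Poisson property of $\a$, and comparison of the two brackets on exact forms after observing that their difference is tensorial over the common anchor --- is exactly how the cited sources proceed. Two remarks. First, there is a convention slip: with $A(\gg)=\ker T\b|_M$ the relevant invariant extension of $v_f=X_{\a^*f}|_M$ is the \emph{left}-invariant one, since right translations $\g\mapsto\g u$ act along $\a$-fibres; indeed formula (4.1) of the paper produces elements of $\ker T\a|_M$, so your parenthetical ``in the right-invariant convention of (4.1) one has $A(\gg)=\ker T\b|_M$'' is not consistent with (4.1). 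This does not damage the substance --- the chain of identifications you actually run ($\ker T\b|_M$, anchor $T\a$, sections represented by the $\b$-fibre-tangent fields $X_{\a^*f}$) is self-consistent and yields the anchor $+\Lambda^\sharp$ claimed in the statement --- but left/right and sign conventions are precisely where such arguments usually break, so the wording should be corrected. Second, you correctly isolate the one genuinely nontrivial input, namely that $X_{\a^*f}$ is the invariant vector field extending its restriction to $M$, which is where the multiplicativity of $\o$ (the Lagrangian condition on $\graph(m)$) enters; delegating that computation to \cite{CDW} or \cite{Mk} is legitimate and is no more than what the paper itself does for the entire proposition. The remaining steps, in particular the $\o$-orthogonality argument pinning down $v=X_{\a^*f}(x)$ inside $\ker T_x\b$ and the reduction of the bracket comparison to exact forms, are sound.
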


Now we can prove the $n$-transitivity property in the symplectic case.
\begin{thm}
Let $(\gg,\Omega)\rr(M,\Lambda)$ be an arbitrary $\a$-connected symplectic groupoid. The group of Lagrangian bisections $\bis(\gg,\o)$ of  $(\gg,\o)$ is $n$-transitive for all $n\in\mathbb N$.
More precisely, let $(x_1\ld x_n)\in M^n$ be a pairwise distinct $n$-tuple and let $(\g_1\ld \g_n)\in \mathcal R(x_1\ld x_n)$. For any $V_1\ld V_n$ such that $V_i$ is an open neighborhood of $L_i$ in $M$, $i=1\ld n$, there exists $\sigma\in\bis(\gg,\o)$ such that $\sigma(x_i)=\g_i$ for each $i$
and $\supp(\b\circ\sigma)\s V_1\cup\ldots\cup V_n$.

In particular, for any $\g\in\gg$ there is $\sigma\in\bis(\gg,\o)$, with $\b\circ\sigma$ supported in an arbitrarily small neighborhood of the leaf $L_{\a(\g)}$, such that $\sigma(\a(\g))=\g$.
\end{thm}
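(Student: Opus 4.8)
The plan is to deduce Theorem 5.5 from the general Theorem 3.3 applied to the bisection group $\G=\bis(\gg,\o)$. Since $\G$ is known to be a Lie group (by \cite{Ryb3}) and in particular isotopically connected on its identity component, the machinery of section 3 applies once we verify the two hypotheses of Theorem 3.3: the (L)-condition for $\G$, and the identification of the foliation $\F^M_\G$ with the foliation $\F_\gg$ (equivalently, with the symplectic foliation $\F_\LL$). For the leaf-dimension dichotomy, observe that if $\dim(L_i)=0$ then $L_i$ is a single point (a zero-dimensional symplectic leaf), so $x_i=L_i$ and the condition $x_j\notin L_i$ for $j\neq i$ holds automatically because the $x_j$ are pairwise distinct; hence the dichotomy hypothesis in Theorem 3.3 is always satisfied here.

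First I would establish that $\F^M_\G=\F_\gg$. By Theorem 3.2(3), the tangent distribution of $\F^M_\G$ at $x$ is generated by the vectors $\dot{(\b\circ\sigma_t)}(x)|_{t=0}$ over all $\sigma\in I\G$; via the evolution operator $\Evol^r_{\bis^\infty(\gg)}$ and Proposition 5.4 identifying $A(\gg)$ with $(T^*M,\{,\},\LL^\sharp)$, the infinitesimal generators of Lagrangian bisection isotopies on $M$ are exactly the Hamiltonian-type vector fields $\LL^\sharp(\theta)$ with $\theta$ a closed $1$-form. These span precisely $\LL^\sharp(T^*_xM)=\operatorname{im}\LL^\sharp_x$, which is the tangent space to the symplectic leaf through $x$. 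By Theorem 5.2(5), for an $\a$-connected symplectic groupoid the symplectic foliation $\F_\LL$ coincides with $\F_\gg$. Therefore $\F^M_\G=\F_\gg$, and $\mathcal R_\G(x_1\ld x_n)=\mathcal R(x_1\ld x_n)$, which is why the statement can be phrased with $\mathcal R(x_1\ld x_n)$ rather than $\mathcal R_\G$.

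Next I would verify the (L)-condition for $\G=\bis(\gg,\o)$. Via Proposition 4.1 it suffices to check that the Lie algebra $\L(\G)$ — here the space of (compactly supported) closed $1$-forms on $M$, by Example 5.3 and Proposition 5.4 — has the localization property: given $\overline U\s V$ and a closed $1$-form $\theta$ near zero, one can produce a closed $1$-form $\eta$ agreeing with $\theta$ on $U$ and supported in $V$. On a relatively compact $U$ a closed form is exact, $\theta=df$ there; choosing a bump function $\chi$ equal to $1$ on $\overline U$ and supported in $V$, the form $d(\chi f)$ is closed, equals $df=\theta$ on $U$, and is supported in $V$. (In the real-analytic case one instead uses the $C^1$-approximation variant as in section 2, following \cite{MV}.) This gives the (L)-condition.

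With both hypotheses in hand, Theorem 3.3 applied to $\G=\bis(\gg,\o)$, the given $(x_1\ld x_n)$ and $(\g_1\ld\g_n)\in\mathcal R(x_1\ld x_n)=\mathcal R_\G(x_1\ld x_n)$, and the neighborhoods $V_1\ld V_n$, yields $\sigma\in\bis(\gg,\o)$ with $\sigma(x_i)=\g_i$ for all $i$ and $\supp(\b\circ\sigma)\s V_1\cup\cdots\cup V_n$; this is the displayed conclusion. For the final ``in particular'' assertion, given $\g\in\gg$, $\a$-connectedness gives $\gg=\gg^o$, so applying the case $n=1$ with $x_1=\a(\g)$ and a single neighborhood $V$ of $L_{\a(\g)}$ produces the desired $\sigma$. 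I expect the only nonroutine point to be the verification that the infinitesimal generators of $\bis(\gg,\o)$ are exactly the closed $1$-forms and that these span the symplectic-leaf directions; once Proposition 5.4 and Theorem 5.2(5) are invoked this is immediate, so in fact the whole proof is essentially a bookkeeping reduction to Theorem 3.3 and Proposition 4.1.
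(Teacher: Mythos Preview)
Your overall strategy is natural, but the verification of the (L)-condition has a genuine gap. You assert that ``on a relatively compact $U$ a closed form is exact,'' which is false whenever $U$ is not simply connected. In fact the full (L)-condition of Definition~1.1 (and hence the hypothesis of Proposition~4.1) can \emph{fail} for $\bis(\gg,\o)$: take the symplectic groupoid $T^*M$ of Example~5.3(2) with $M=S^1\times\R$, let $U=S^1\times(-1,1)\subset V=S^1\times(-2,2)$, and let $\theta$ be any small nonzero multiple of the angular form. Any closed $1$-form $\eta$ on $M$ agreeing with $\theta$ on $U$ has the same nonzero period as $\theta$ over $S^1\times\{0\}$; since $S^1\times\{0\}$ and $S^1\times\{3\}$ are homologous in $M$, $\eta$ cannot vanish outside $V$. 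So Proposition~4.1 is not available here, and Theorem~3.3 cannot be invoked as written.

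The paper sidesteps this by a different and more direct route: it returns to the \emph{proof} of Theorem~1.3 and isolates the only place where (L) together with fiberwise transitivity is actually used, namely the production, for each $x_i$ and any small neighborhood $U_i\ni x_i$, of isotopies $\sigma^{ij}\in I\G_{U_i}$ whose initial velocities span $A_{x_i}(\gg)$ (this is property~$(*)$ in the paper's proof). For Lagrangian bisections $(*)$ is immediate using \emph{exact} forms: choose $u^{ij}\in C^\infty_c(U_i)$ with $du^{i1}(x_i),\ldots,du^{ik}(x_i)$ a basis of $T^*_{x_i}M\cong A_{x_i}(\gg)$ and exponentiate. Because the required localization is only at individual points, no global cut-off of closed forms is ever needed, and the detour through $\F^M_\G=\F_\gg$ and Theorem~3.3 becomes unnecessary. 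Your argument is repaired precisely by replacing the appeal to Proposition~4.1 with this pointwise construction --- which is, in effect, the paper's approach.
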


\begin{proof}
Since the leaves of $\F_\gg$ are symplectic, the hypothesis on $(\g_1\ld\g_n)$ in Theorem 1.3 is satisfied. In fact, we have either $\dim(L_i)\geq 2$, or $L_i=\{x_i\}$.
In the latter case we assume $x_i\not\in V_j$ for $j\neq i$. Concerning the assumptions on $\G$ observe the following.
In the proof of Theorem 1.3 in place of (L)-condition and the fiberwise transitivity is only needed the following property:

\medskip

$(*)$ For any $x\in M$ and any $U$ an open neighborhood of $x$ in $M$ there are $\sigma^1\ld\sigma^k\in\sect_U(A(\gg))$, where $k=\dim(\gg_x)$, and $\sect_U$ denotes sections
supported in $U$, such that the equalities (1.1) hold. 

\medskip

Note that in our situation $\dim(\gg)=2k$ with $\dim(M)=k$ and $\dim(\gg_x)=k$. Obviously the condition $(*)$ holds for $(\gg,\o)$. Let $(x_1\ld x_n)$ be an $n$-tuple of pairwise distinct points and $V_1\t\cdots\t V_n\s M^n$ be an open neighborhood of $L_1\t\cdots\t L_n$. In view of $(*)$ we can find smooth  functions $u^{ij}\in \cc(M,\R)$, where $i=1\ld n, j=1\ld k$ such that $\supp(u^{ij})\s U_i$ with $\b(\gg^o_{U_i})\s V_i$ for all $i,j$, and the vector fields
$$ X^{ij}=\o^\sharp(\dd u^{ij})$$ are such that
$$ X^{i1}(x_i)\ld X^{ik}(x_i)$$ constitutes a basis of $A_{x_i}(\gg)$. Then we put $\sigma^{ij}=\exp(X^{ij})$ and the rest of the proof is the same as that of Theorem 1.3.
\end{proof}

Observe that for any $\sigma\in\bis(\gg,\o)$ the mapping $M\ni x\mapsto(\b\circ\sigma)(x)\in M$
is a Poisson diffeomorphism of $(M, \La)$. Analogously as for general groupoids, but without the assumption on $(x_1\ld x_n)$,  we get

\begin{cor}
Under the above assumption,
let $(x_1\ld x_n), (y_1\ld y_n)\in M^n$ be  pairwise distinct $n$-tuples such that $x_i, y_i$ belong to the same leaf of the foliation $\F_\gg=\F_\Lambda$ 
on $M$.   Then for any  $V_1\ld V_n$ such that $V_i$ is an open neighborhood of the leaf $L_i\in\F_\gg$, $i=1\ld n$, there exists $\sigma\in\bis(\gg, \o)$ such that the Poisson diffeomorphism $\b\circ\sigma$ satisfies $(\b\circ\sigma)(x_i)=y_i$
and $\supp(\b\circ\sigma)\s V_1\cup\ldots\cup V_n$.
\end{cor}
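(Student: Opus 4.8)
The plan is to derive Corollary 5.7 from Theorem 5.6 (the $n$-transitivity of $\bis(\gg,\o)$) together with the fact, recorded in Theorem 5.2(4)--(5), that for an $\a$-connected symplectic groupoid the foliation $\F_\gg$ induced by the groupoid coincides with the symplectic foliation $\F_\Lambda$ of $(M,\Lambda)$, and that $\b\circ\sigma$ is always a Poisson diffeomorphism of $(M,\Lambda)$. First I would observe that, since $x_i$ and $y_i$ lie on the same leaf $L_i\in\F_\gg$ and $\b|_{\gg_{x_i}^o}\colon\gg_{x_i}^o\to L_i$ is surjective (the target projection restricted to a connected $\a$-fibre component hits exactly the leaf), I can pick arrows $\g_i\in\gg_{x_i}^o$ with $\b(\g_i)=y_i$. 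The hypothesis that the $n$-tuple $(y_1\ld y_n)$ is pairwise distinct guarantees $\b(\g_i)\neq\b(\g_j)$ for $i\neq j$, so $(\g_1\ld\g_n)\in\mathcal R(x_1\ld x_n)$.

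Next I would feed this data into Theorem 5.6. There is one subtlety: Theorem 5.6, as stated, inherits from Theorem 1.3 the side assumption that for each $i$ either $\dim(L_i)\geq 2$ or $x_j\notin L_i$ for all $j\neq i$. But as already noted in the proof of Theorem 5.6, the leaves $L_i$ of $\F_\gg$ are symplectic, hence even-dimensional, so either $\dim(L_i)\geq 2$ or $L_i=\{x_i\}$; and in the degenerate case $L_i=\{x_i\}$ the condition $x_i$ and $y_i$ lie on the same leaf forces $y_i=x_i$, so this component imposes nothing and can be handled by shrinking $V_i$ away from the other base points exactly as in the proof of Theorem 5.6. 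Thus the hypotheses of Theorem 5.6 are met with the chosen $(\g_1\ld\g_n)$ and the given neighbourhoods $V_1\ld V_n$, and we obtain $\sigma\in\bis(\gg,\o)$ with $\sigma(x_i)=\g_i$ for all $i$ and $\supp(\b\circ\sigma)\s V_1\cup\ldots\cup V_n$.

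Finally I would translate the conclusion back to base level: applying $\b$ to $\sigma(x_i)=\g_i$ gives $(\b\circ\sigma)(x_i)=\b(\g_i)=y_i$, and by Theorem 5.2(4) the diffeomorphism $\b\circ\sigma$ is automatically a Poisson diffeomorphism of $(M,\Lambda)$, which is the remaining assertion. The support statement is immediate from Theorem 5.6. I do not anticipate a serious obstacle here; the only thing needing care is the book-keeping around the zero-dimensional leaves (which, as above, reduces to the identity case) and making explicit that pairwise distinctness of $(y_1\ld y_n)$ is exactly what is needed to land in $\mathcal R(x_1\ld x_n)$ rather than merely in the product of $\a$-fibre components.
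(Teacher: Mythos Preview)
Your argument is correct and coincides with the paper's own (implicit) proof: the corollary is obtained exactly as Corollary 3.4 is obtained from Theorem 3.3, namely by lifting each $y_i$ to some $\g_i\in\gg_{x_i}^o$ via the surjection $\b|_{\gg_{x_i}^o}$, noting that distinctness of the $y_i$ puts $(\g_1,\ldots,\g_n)$ in $\mathcal R(x_1,\ldots,x_n)$, and then invoking Theorem 5.6. Your handling of the zero-dimensional leaves is also the paper's: since symplectic leaves are even-dimensional, the side assumption on $(x_1,\ldots,x_n)$ from Theorem 1.3 is automatically satisfied (or vacuous), which is precisely why the paper says the corollary holds ``without the assumption on $(x_1,\ldots,x_n)$.''
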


\end{document}